\documentclass[12pt, reqno]{amsart}
\usepackage{amsmath, amsthm, amscd, amsfonts, amssymb, graphicx, color}
\usepackage[bookmarksnumbered, colorlinks, plainpages]{hyperref}
\hypersetup{colorlinks=true,linkcolor=red, anchorcolor=green, citecolor=cyan, urlcolor=red, filecolor=magenta, pdftoolbar=true}
\newtheoremstyle{uprightplain}
{6pt}{6pt}
{\normalfont}
{}{\bfseries}{.}{ }{}
\theoremstyle{uprightplain}
\newtheorem{theorem}{Theorem}[section]
\newtheorem{lemma}[theorem]{Lemma}
\newtheorem{proposition}[theorem]{Proposition}
\newtheorem{cor}[theorem]{Corollary}
\theoremstyle{definition}

\newtheorem{example}[theorem]{Example}
\newtheorem{remark}[theorem]{Remark}
\numberwithin{equation}{section}
\allowdisplaybreaks
\begin{document}
	\title [Composition-differentiation operator]{Composition-differentiation operators on weighted Dirichlet spaces} 
	
	\author[A. Sen, S. Barik and K. Paul]{Anirban Sen, Somdatta Barik and Kallol paul}
	
	\address[Sen] {Mathematical Institute, Silesian University in Opava, Na Rybn\'{\i}\v{c}ku 1, 74601 Opava, Czech Republic}
	\email{anirbansenfulia@gmail.com; Anirban.Sen@math.slu.cz}
	
	\address[Barik]{Department of Mathematics, Jadavpur University, Kolkata 700032, West Bengal, India}
	\email{bariksomdatta97@gmail.com}

	\address[Paul] {Vice-Chancellor\\
		Kalyani University\\
		West Bengal 741235 \\and 
		Professor (on lien)\\ Department of mathematics\\ Jadavpur University\\Kolkata 700032\\West Bengal\\India}
	\email{kalloldada@gmail.com}

	\subjclass[2020]{Primary: 47B38; Secondary: 47A05, 30H05}
	
	\keywords{Composition operators, differentiation, Hilbert-Schmidt class, weighted Dirichlet spaces}
	\begin{abstract}  
		We characterize bounded, compact, and Hilbert–Schmidt composition–differentiation operators on weighted Dirichlet spaces. The essential norm is estimated via the asymptotic behavior of a function that involves the generalized Nevanlinna counting function of the inducing map. Norm estimates for particular inducing maps are given, and examples are provided to demonstrate the applicability of the results.
	\end{abstract}
	\maketitle	
	%   \tableofcontents
	\section{Introduction}
	Let $\mathcal H(\mathbb{D})$ be the space of complex-valued holomorphic functions on the open unit disk $\mathbb{D}=\{z \in \mathbb{C} : |z|<1\}.$
	For $\varphi \in \mathcal H(\mathbb{D}),$ with $\varphi(\mathbb{D})\subseteq \mathbb{D},$ the composition operator $C_{\varphi }$ on  $\mathcal H(\mathbb{D})$ is defined by
	$$C_{\varphi }f= f\circ \varphi,~~f \in \mathcal H(\mathbb{D}).$$ The properties of composition operators have been extensively studied over the past decades, see \cite{PP_JMAAA_2013,Z_PAMS_1998, Z_IUMJ_1990, Z_PAMS_1989} for further details. In recent developments, a new form of this operator has emerged as an object of study. Let $D$ denote the differentiation operator, which is generally unbounded on the spaces of analytic functions. We consider the composition-differentiation operator given by $$D_\varphi f=C_\varphi Df=f'\circ \varphi, ~~~f\in\mathcal H(\mathbb D).$$ It is known that $C_\varphi$ 
	is bounded on various analytic function spaces, whereas $D_\varphi$
	can be unbounded in the same setting. In \cite{HP_RMJM_2005}, the authors defined the operator $D_\varphi$ and examined its boundedness and compactness between weighted Bergman spaces via Carleson-type measures. Following this, the operator $D_\varphi$ has been studied extensively on Hardy, Bergman, and Dirichlet spaces, see \cite{AHP_JMAAA_2022,FH_PAMS_2020, 0_BAMS_2006}. However, the bounded and compact composition–differentiation operators on weighted Dirichlet spaces have yet to be fully characterized.
	
	The goal of this article is to analyze composition–differentiation operators acting on weighted Dirichlet spaces $\mathcal{D}_{\alpha}$ for $\alpha \in (0,1)$ and to obtain elegant characterizations.
	More precisely, we begin by establishing a function-theoretic characterization of the boundedness of $D_\varphi$ on weighted Dirichlet spaces in terms of the generalized Nevanlinna counting function $N_{\varphi, \alpha}$, stated as follows.
	\begin{theorem}\label{TH1}
		Let $\varphi\in\mathcal D_\alpha$ be an analytic self-map of $\mathbb D.$ Then $D_{\varphi}$ is bounded on $\mathcal{D}_{\alpha}$ if and only if 
		$$\sup \left\{\frac{N_{\varphi, \alpha}(w)}{(1-|w|^2)^{\alpha+2}}: w \in \mathbb D\setminus\{\varphi(0)\} \right\}<\infty.$$
	\end{theorem}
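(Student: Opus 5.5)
We would work with the standard weighted Dirichlet norm
$$\|f\|_{\mathcal D_\alpha}^2=|f(0)|^2+\int_{\mathbb D}|f'(z)|^2(1-|z|^2)^\alpha\,dA(z),$$
and the generalized Nevanlinna counting function
$$N_{\varphi,\alpha}(w)=\sum_{z\in\varphi^{-1}(w)}\bigl(\log\tfrac1{|z|}\bigr)^\alpha,$$
or its equivalent with $(1-|z|^2)^\alpha$. The plan is to turn the boundedness of $D_\varphi$ into a Carleson-measure condition on the weighted Bergman space $A^2_{\alpha+2}$ (with weight $(1-|z|^2)^{\alpha+2}$); $\mathcal D_\alpha'=A^2_\alpha$ maps isomorphically onto it under $f\mapsto f'$.

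For $f\in\mathcal D_\alpha$ we have $(D_\varphi f)'=(f''\circ\varphi)\varphi'$. The non-univalent change of variables (area formula) then gives, up to the harmless constant term and the equivalence $\log(1/|z|)\asymp 1-|z|$ away from $0$,
$$\|D_\varphi f\|_{\mathcal D_\alpha}^2\asymp |f'(\varphi(0))|^2+\int_{\mathbb D}|f''(w)|^2 N_{\varphi,\alpha}(w)\,dA(w).$$
Since $f\mapsto f''$ is an isomorphism from $\mathcal D_\alpha$ (modulo polynomials of degree $\le1$) onto $A^2_{\alpha+2}$, boundedness of $D_\varphi$ is equivalent to the measure $d\mu=N_{\varphi,\alpha}\,dA$ being a Carleson measure for $A^2_{\alpha+2}$. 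The term $|f'(\varphi(0))|^2$ is controlled by point evaluation of the derivative, which is bounded on $\mathcal D_\alpha$.

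\emph{Sufficiency.} The key tool is a sub-mean-value property of $N_{\varphi,\alpha}$: for $0<\alpha\le1$, and $w$ away from $\varphi(0)$, the value $N_{\varphi,\alpha}(w)$ is bounded by a constant times its average over a pseudo-hyperbolic disk $\Delta(w,r)$. This is the Shapiro/Zorboska-type estimate, and we would obtain it through the superharmonicity/subharmonicity argument of Zorboska or Kellay–Lefèvre for $\alpha\in(0,1)$. Conversely, under the hypothesis $N_{\varphi,\alpha}(w)\le C(1-|w|^2)^{\alpha+2}$, the measure $\mu(\Delta(a,r))$ is bounded by $C(1-|a|)^{\alpha+4}$, which is exactly the Carleson box condition for $A^2_{\alpha+2}$. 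Near $\varphi(0)$ the counting function has only a logarithmic singularity and is integrable, so it is harmless. A direct alternative is to use $|f''(w)|^2(1-|w|^2)^{\alpha+4}\lesssim \|f\|^2$ pointwise only for the little-o version. For boundedness itself the Carleson criterion applies; the hypothesis gives the pointwise bound, which dominates $\mu(\Delta)\lesssim \int_\Delta(1-|w|)^{\alpha+2}dA$.

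\emph{Necessity.} We would test on normalized kernels $f_a$ with $f_a''(w)\asymp (1-|a|^2)^{(\alpha+4)/2}/(1-\bar a w)^{\alpha+4}$, for instance $f_a(z)=(1-|a|^2)^{(\alpha+4)/2}/(\bar a^2(1-\bar a z)^{\alpha+2})$ adjusted for small $|a|$. These satisfy $\|f_a\|_{\mathcal D_\alpha}\lesssim1$ and $|f_a''|\gtrsim(1-|a|^2)^{-(\alpha+4)/2}$ on $\Delta(a,r)$. Boundedness then gives
$$\int_{\Delta(a,r)}N_{\varphi,\alpha}\,dA\lesssim(1-|a|^2)^{\alpha+4}.$$
Combining this with the sub-mean-value property, $N_{\varphi,\alpha}(a)\lesssim (1-|a|^2)^{-2}\int_{\Delta(a,r)}N_{\varphi,\alpha}\,dA\lesssim(1-|a|^2)^{\alpha+2}$ for $|a|$ close to $1$. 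On compact subsets away from $\varphi(0)$, $N_{\varphi,\alpha}$ is bounded by $\varphi\in\mathcal D_\alpha$ together with the sub-mean-value estimate applied via $\|\varphi\|$ (i.e. $\int N_{\varphi,\alpha}dA<\infty$). This gives the supremum condition.

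The main obstacle is the sub-mean-value inequality for $N_{\varphi,\alpha}$ when $0<\alpha<1$, since $N_{\varphi,\alpha}$ is not subharmonic in general. We would try to establish it by first writing $(\log 1/|z|)^\alpha$ through a Littlewood-type subordination on disks $\Delta(w,r)$, or we would cite the known inequality for generalized counting functions in Kellay–Lefèvre.
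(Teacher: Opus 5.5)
\textbf{There is a genuine gap in the interior, at and around $\varphi(0)$, and in fact it cannot be closed.}

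Your boundary argument is the paper's. The change of variables reduces everything to $\int_{\mathbb D}|f''|^2N_{\varphi,\alpha}\,dA$. Your $f_a$ play the role of the paper's $f_w$: both are uniformly bounded in $\mathcal D_\alpha$, with $|f''|^2\cong(1-|a|)^{-(\alpha+4)}$ on a disc about $a$. Then \eqref{NP1} turns $\int_{\Delta(a,r)}N_{\varphi,\alpha}\,dA\lesssim(1-|a|^2)^{\alpha+4}$ into the pointwise bound. For sufficiency the Carleson detour is correct but unnecessary: the hypothesis gives $\int_{\mathbb D}|f''|^2N_{\varphi,\alpha}\,dA\le C\|f''\|^2_{A^2_{\alpha+2}}$ directly, and the sub-mean-value property plays no role there.

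The gap is this. The supremum runs over every $w\neq\varphi(0)$, but you only control $|w|$ near $1$ and compact sets avoiding $\varphi(0)$. As $w\to\varphi(0)$ the admissible discs in \eqref{NP1} shrink, so nothing follows. Worse, the two counting functions you treat as interchangeable differ exactly there. With $(\log\frac1{|z|})^\alpha$, the preimages of $w$ that tend to $0$ force $N_{\varphi,\alpha}(w)\to\infty$ as $w\to\varphi(0)$ for every nonconstant $\varphi$. So necessity already fails for $\varphi(z)=z/2$, and only $(1-|z|^2)^\alpha$ is compatible with the statement. The paper disposes of the whole disc $|w|\le\frac12(1+|\varphi(0)|)$ at once via the upper semicontinuous regularization $u_\alpha$.

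This step cannot be repaired, in your version or the paper's. The function $u_\alpha$ is subharmonic only on $\mathbb D\setminus\{\varphi(0)\}$, so it need not be bounded on a compact set containing $\varphi(0)$. Moreover, the interior bound is false in general:
\begin{itemize}
\item For $\alpha\in(\frac12,1)$ let $S(z)=\exp\bigl(\frac{z+1}{z-1}\bigr)$. The preimages of $\zeta=e^{-L+i\theta}$ satisfy $1-|z_k|^2=\frac{4L}{(L+1)^2+(\theta+2\pi k)^2}$, $k\in\mathbb Z$.
\item For $L\ge1$, at least $\lfloor L/\pi\rfloor$ of them have $1-|z_k|^2\ge\frac{4}{5L}$. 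Hence $N_{S,\alpha}(\zeta)\gtrsim L^{1-\alpha}\to\infty$ as $\zeta\to0$.
\item The same formula gives $N_{S,\alpha}(\zeta)\lesssim L^{\alpha}(1+L)^{1-2\alpha}$, which is integrable. So $S\in\mathcal D_\alpha$ by \eqref{COV}.
\item Take $\varphi=S/2$. Then $\varphi\in\mathcal D_\alpha$ and $D_\varphi$ is bounded, since $\|D_\varphi f\|^2_{\mathcal D_\alpha}\lesssim\bigl(1+\|\varphi\|^2_{\mathcal D_\alpha}\bigr)\sup_{|\zeta|\le1/2}\bigl(|f'(\zeta)|^2+|f''(\zeta)|^2\bigr)\lesssim\|f\|^2_{\mathcal D_\alpha}$.
\item Yet $N_{\varphi,\alpha}(w)=N_{S,\alpha}(2w)\to\infty$ as $w\to0$, and $0\neq\varphi(0)=\frac1{2e}$.
\end{itemize}
So \eqref{NP1} and your compact-set bound both fail at interior points, and the supremum condition as stated is not necessary for $\alpha\in(\frac12,1)$.

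What the test-function argument actually gives, granting \eqref{NP1} near $\partial\mathbb D$ as both you and the paper do, is the condition with the supremum restricted to $r<|w|<1$. Together with $\int_{\mathbb D}N_{\varphi,\alpha}\,dA\lesssim\|\varphi\|^2_{\mathcal D_\alpha}$, that restricted condition is also sufficient, by splitting the integral at $|w|=r$.
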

	As a consequence of the above theorem, it follows that the composition–-differentiation operators induced by the automorphisms are unbounded. Next, we derive an estimate for the essential norm of composition–differentiation operators on weighted Dirichlet spaces.
	\begin{theorem}\label{TH2}
		Let $\varphi$ be an analytic self-map of $\mathbb D$ such that $D_{\varphi}$ is bounded on $\mathcal{D}_{\alpha}.$ Then
		\begin{align*}
			\|D_{\varphi}\|^2_{e}\cong \limsup\limits_{|w| \to 1^-}\frac{N_{\varphi, \alpha}(w)}{(1-|w|^2)^{\alpha+2}}.
		\end{align*}
	\end{theorem}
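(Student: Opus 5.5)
We work with the norm $\|f\|_{\alpha}^2=|f(0)|^2+\int_{\mathbb D}|f'(z)|^2(1-|z|^2)^{\alpha}\,dA(z)$ on $\mathcal D_\alpha$ and the generalized counting function $N_{\varphi,\alpha}(w)=\sum_{\varphi(z)=w}(1-|z|^2)^{\alpha}$ (or its $\log$-type variant, as defined in the paper). The starting point is the change of variables formula
$$\|D_\varphi f\|_\alpha^2=|f'(\varphi(0))|^2+\int_{\mathbb D}|f''(w)|^2 N_{\varphi,\alpha}(w)\,dA(w),$$
used in the proof of Theorem~\ref{TH1}. We also use the equivalent norm $\|f\|_\alpha^2\cong\sum_{k\le 1}|f^{(k)}(0)|^2+\int_{\mathbb D}|f''(w)|^2(1-|w|^2)^{\alpha+2}\,dA(w)$. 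This equivalence turns $D_\varphi$ into a Carleson-embedding-type problem for the measure $d\mu=N_{\varphi,\alpha}\,dA$ relative to the weight $(1-|w|^2)^{\alpha+2}$. Write $L=\limsup_{|w|\to1}N_{\varphi,\alpha}(w)/(1-|w|^2)^{\alpha+2}$.

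\textbf{Lower bound.} Take normalized test functions $k_a$ with $k_a''$ comparable to the normalized reproducing kernel of the weighted Bergman space $A^2_{\alpha+2}$. A concrete choice is $k_a(z)=(1-|a|^2)^{\beta}/(1-\bar a z)^{\beta-1+\alpha/2}$ with suitable $\beta$, so that $\|k_a\|_\alpha\cong1$. These functions tend to $0$ weakly as $|a|\to1$. Hence for any compact $K$ we have $\|D_\varphi-K\|\ge\limsup_{|a|\to1}\|D_\varphi k_a\|_\alpha$. Next we need a sub-mean-value property for $N_{\varphi,\alpha}$: on a pseudo-hyperbolic disk $\Delta(a,r)$ the average of $N_{\varphi,\alpha}$ dominates a multiple of $N_{\varphi,\alpha}(a)$. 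This holds for $|a|$ close to $1$ and $a$ away from $\varphi(0)$, as in Kellay–Lefèvre/Pau–Pérez for the standard case, and should already appear in the proof of Theorem~\ref{TH1}. Combining it with $|k_a''|^2\cong(1-|a|^2)^{-(\alpha+4)}$ on $\Delta(a,r)$ gives
$$\|D_\varphi k_a\|_\alpha^2\gtrsim \frac{1}{(1-|a|^2)^{\alpha+4}}\int_{\Delta(a,r)}N_{\varphi,\alpha}\,dA\gtrsim \frac{N_{\varphi,\alpha}(a)}{(1-|a|^2)^{\alpha+2}},$$
and therefore $\|D_\varphi\|_e^2\gtrsim L$.

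\textbf{Upper bound.} Let $K_n f=\sum_{k\le n}\hat f(k)z^k$ be the Taylor partial sum projection and $R_n=I-K_n$. Then $D_\varphi K_n$ is compact (finite rank), and so $\|D_\varphi\|_e\le\liminf_n\|D_\varphi R_n\|$. Fix $r<1$ and split the integral for $\|D_\varphi R_nf\|_\alpha^2$ over $|w|\le r$ and $r<|w|<1$. The constant term $|(R_nf)'(\varphi(0))|^2$ and the part over $|w|\le r$ tend to $0$ uniformly over $\|f\|_\alpha\le1$, because $(R_nf)''\to0$ uniformly on compacts; here one needs $\int_{|w|\le r}N_{\varphi,\alpha}\,dA<\infty$, which holds since $\varphi\in\mathcal D_\alpha$. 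On the outer region,
$$\int_{r<|w|<1}|(R_nf)''|^2N_{\varphi,\alpha}\,dA\le \sup_{|w|>r}\frac{N_{\varphi,\alpha}(w)}{(1-|w|^2)^{\alpha+2}}\int_{\mathbb D}|(R_nf)''|^2(1-|w|^2)^{\alpha+2}dA\lesssim \sup_{|w|>r}\frac{N_{\varphi,\alpha}(w)}{(1-|w|^2)^{\alpha+2}},$$
using that $\|R_nf\|_\alpha\le\|f\|_\alpha$ together with the norm equivalence above. Letting $n\to\infty$ and then $r\to1$ gives $\|D_\varphi\|_e^2\lesssim L$.

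The main obstacle is the lower bound, specifically the sub-mean-value inequality for $N_{\varphi,\alpha}$ when $\alpha\in(0,1)$. Unlike the classical Nevanlinna function, $N_{\varphi,\alpha}$ is not directly subharmonic. The first thing to try is the approach of Kellay–Lefèvre: compare $N_{\varphi,\alpha}$ with the $(\alpha)$-power averaged counting function and use subharmonicity of the partial counting function off $\varphi(0)$. If that fails, fall back on replacing $N_{\varphi,\alpha}(a)$ by its average over $\Delta(a,r)$ in the statement; the two limsups are equivalent once boundedness (Theorem~\ref{TH1}) is in hand.
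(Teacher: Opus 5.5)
Your proposal is correct and is essentially the paper's proof. For the upper bound, the paper uses the same Taylor remainders $R_n$ with the split at $|w|=r$; you only need the trivial inequality $\|D_\varphi\|_e\le\|D_\varphi R_n\|$, whereas the paper invokes Shapiro's lemma. For the lower bound, the paper uses test functions whose second derivatives behave like normalized $A^2_{\alpha+2}$ kernels near $a$, together with the sub-mean-value estimate \eqref{NP1} of Pau--P\'erez, which the paper already uses in Theorem \ref{TH1}.

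Two small corrections. First, your explicit $k_a$ has the wrong exponent: no choice of $\beta$ gives $\|k_a\|_\alpha\cong 1$, and for $\beta>1$ one gets $\|k_a\|_\alpha\cong 1-|a|^2$. Take $(1-|a|^2)^{\beta}(1-\bar a z)^{-(\beta+\alpha/2)}$ with $\beta>0$ instead, for example the paper's $f_w$ with $f_w'(z)=(1-|w|^2)^{(2+\alpha)/2}(1-\bar w z)^{-(\alpha+2)}$. Second, your fallback is not valid: recovering the pointwise value $N_{\varphi,\alpha}(a)$ from its disc averages is exactly \eqref{NP1} and does not follow from boundedness.
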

	
	Relying on the preceding essential norm estimate, we establish a characterization of the compactness of $D_{\varphi}$ on weighted Dirichlet spaces. 
	\begin{theorem}\label{THC_3}
		Let $\varphi\in\mathcal{D}_{\alpha}$ be an analytic self-map of $\mathbb D.$ Then
		$D_{\varphi}$ is compact on $\mathcal{D}_{\alpha}$ if and only if $$\lim\limits_{|w| \to 1^-}\frac{N_{\varphi, \alpha}(w)}{(1-|w|^2)^{\alpha+2}}=0.$$
	\end{theorem}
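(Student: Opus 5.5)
The plan is to derive Theorem \ref{THC_3} from Theorems \ref{TH1} and \ref{TH2}. Recall that $D_\varphi$ is compact if and only if $\|D_\varphi\|_e=0$; this holds because $\mathcal D_\alpha$ is a Hilbert space, so the compact operators are exactly the norm limits of finite-rank operators. The one subtlety is that Theorem \ref{TH2} assumes $D_\varphi$ is already bounded, while the hypothesis here is only $\varphi\in\mathcal D_\alpha$. Each direction therefore needs a short reduction to the bounded case.

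\emph{Necessity.} Suppose $D_\varphi$ is compact on $\mathcal D_\alpha$. Then it is bounded, so Theorem \ref{TH2} applies and gives
\[
\limsup_{|w|\to1^-}\frac{N_{\varphi,\alpha}(w)}{(1-|w|^2)^{\alpha+2}}\lesssim \|D_\varphi\|_e^2=0 .
\]
The quotient is nonnegative, so the limit exists and equals $0$.

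\emph{Sufficiency.} Assume the limit is $0$. First I establish boundedness via Theorem \ref{TH1}. Choose $r\in(0,1)$ with $|\varphi(0)|<r$ such that the quotient is at most $1$ for $r<|w|<1$. On the closed disc $|w|\le r$, the factor $(1-|w|^2)^{-\alpha-2}$ is at most $(1-r^2)^{-\alpha-2}$. So it remains to bound $N_{\varphi,\alpha}$ on $\{|w|\le r\}\setminus\{\varphi(0)\}$. For this I would use the standard sub-mean-value estimate for the generalized Nevanlinna counting function, which follows from Littlewood-type subharmonicity arguments. It gives
\[
N_{\varphi,\alpha}(w)\le \frac{C}{\delta^2}\int_{D(w,\delta)} N_{\varphi,\alpha}\,dA ,
\]
valid away from $\varphi(0)$. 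Combined with the change of variables formula
\[
\int_{\mathbb D} N_{\varphi,\alpha}\,dA=\int_{\mathbb D}|\varphi'|^2(1-|z|^2)^{\alpha}\,dA\lesssim\|\varphi\|_{\mathcal D_\alpha}^2<\infty ,
\]
this shows $N_{\varphi,\alpha}$ is bounded on compact subsets away from $\varphi(0)$. Theorem \ref{TH1} only takes the supremum over $w\neq\varphi(0)$, and near $\varphi(0)$ the counting function behaves like $(1-|w|^2)^{\alpha}$ times a bounded quantity, so no blow-up occurs there either. Alternatively, if the paper's proof of Theorem \ref{TH1} only uses the quotient near the boundary together with $\varphi\in\mathcal D_\alpha$, this step is immediate. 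Hence the supremum in Theorem \ref{TH1} is finite and $D_\varphi$ is bounded.

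With boundedness in hand, Theorem \ref{TH2} gives $\|D_\varphi\|_e^2\lesssim 0$, so $\|D_\varphi\|_e=0$ and $D_\varphi$ is compact.

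The main obstacle is the boundedness step in the sufficiency direction, that is, controlling $N_{\varphi,\alpha}$ on compact subsets of the disc, in particular near $\varphi(0)$. I would try the sub-mean-value property of $N_{\varphi,\alpha}$ first, since it is the standard tool in the Kellay–Lefèvre treatment of weighted Dirichlet spaces. If that property turns out to be unavailable at $\varphi(0)$, the fallback is to argue directly from how Theorem \ref{TH1} is proved.
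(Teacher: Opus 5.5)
\textbf{Verdict: same route as the paper, with one unproved step.} You follow the paper's proof: necessity comes from Theorem \ref{TH2} applied to the compact, hence bounded, operator. Sufficiency first gets boundedness from Theorem \ref{TH1} and then reads off $\|D_\varphi\|_e=0$ from Theorem \ref{TH2}.

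The step you do not prove is the uniform bound for $N_{\varphi,\alpha}$ on $\{|w|\le r\}\setminus\{\varphi(0)\}$ near $\varphi(0)$. Away from $\varphi(0)$, your sub-mean-value plus integrability argument is fine. Near $\varphi(0)$, however, the claim that the counting function ``behaves like $(1-|w|^2)^{\alpha}$ times a bounded quantity'' is not an argument. Only the preimages of $w$ close to $0$ are under control: they contribute at most the multiplicity of $\varphi-\varphi(0)$ at $0$. The other preimages can lie anywhere in $\mathbb D$, and \eqref{NP1} is not available on discs containing $\varphi(0)$. The paper is no better here: it asserts that $N_{\varphi,\alpha}$ is bounded on $\{|w|\le r_0\}$ ``by the definition'', which the definition alone does not give.

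\textbf{First fix: avoid pointwise bounds on compacta.} This makes your ``fallback'' precise. For $f\in\mathcal D_\alpha$, split
\[
\int_{\mathbb D}|f''|^2N_{\varphi,\alpha}\,dA=\int_{r\mathbb D}|f''|^2N_{\varphi,\alpha}\,dA+\int_{\mathbb D\setminus r\mathbb D}|f''|^2N_{\varphi,\alpha}\,dA .
\]
\begin{itemize}
\item The first piece is at most $\sup_{|z|\le r}|f''(z)|^2\int_{\mathbb D}N_{\varphi,\alpha}\,dA\lesssim_r\|f\|^2_{\mathcal D_\alpha}\|\varphi\|^2_{\mathcal D_\alpha}$. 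This uses \eqref{COV} with the constant function $1$.
\item The second piece is at most $\sup_{|w|\ge r}N_{\varphi,\alpha}(w)(1-|w|^2)^{-\alpha-2}\,\|f''\|^2_{A^2_{\alpha+2}}\lesssim\|f\|^2_{\mathcal D_\alpha}$.
\end{itemize}
Together with $|f'(\varphi(0))|\le\|k^{\alpha^{(1)}}_{\varphi(0)}\|_{\mathcal D_\alpha}\|f\|_{\mathcal D_\alpha}$, this gives boundedness of $D_\varphi$ directly, exactly as in the upper estimate in the proof of Theorem \ref{TH2}. Theorem \ref{TH1} is then not needed.

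\textbf{Second fix: keep the pointwise route and move the base point.} If $\varphi$ is constant there is nothing to prove. Otherwise:
\begin{itemize}
\item Pick $b$ with $\varphi(b)\neq\varphi(0)$ and set $\psi=\varphi\circ\varphi_b$, so that $\psi(0)=\varphi(b)$.
\item Since $1-|\varphi_b(z)|^2\cong_b 1-|z|^2$, you get $N_{\psi,\alpha}\cong N_{\varphi,\alpha}$ pointwise.
\item Apply \eqref{NP1} to $N_{\psi,\alpha}$ on small discs around points near $\varphi(0)$. These discs avoid $\psi(0)$, so $N_{\varphi,\alpha}$ is bounded there by a multiple of $\int_{\mathbb D}N_{\varphi,\alpha}\,dA<\infty$.
\end{itemize}
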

	This yields that if $\varphi \in \mathcal{D_\alpha}$ and $\|\varphi\|_{\infty} < 1$ then the operator $D_{\varphi}$ is compact on $\mathcal{D}_{\alpha}$. In addition, by combining the preceding results with suitable examples, we demonstrate that when $\|\varphi\|_{\infty}=1$, the operator $D_{\varphi}$ can be either compact or non-compact. Furthermore, the above results imply that whenever $D_\varphi$ is bounded (respectively, compact) on $\mathcal D_\alpha$ for some $\alpha\in(0,1),$ it remains bounded (respectively, compact) on $\mathcal{D}_{\gamma}$ for all $\gamma \in (\alpha,1)$. Next, we give a  characterization of Hilbert–Schmidt composition–differentiation operators on $\mathcal{D}_{\alpha}$.
	
	\begin{theorem}\label{TH_HS}
		Let $\varphi\in\mathcal D_\alpha$ be a holomorphic self-map of $\mathbb D.$ Then $D_\varphi$ is Hilbert-Schmidt on $\mathcal D_\alpha$ if and only if $$\int_{\mathbb D}\frac{N_{\varphi,\alpha}(w)}{(1-|w|^2)^{\alpha+4}}dA(w)<\infty.$$
	\end{theorem}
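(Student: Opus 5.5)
The plan is to compute the Hilbert--Schmidt norm of $D_\varphi$ directly in the monomial orthonormal basis of $\mathcal D_\alpha$, and to rewrite the resulting sum as an integral against $N_{\varphi,\alpha}$ by the change-of-variables formula. I use the norm
$$\|f\|^2_{\mathcal D_\alpha}=|f(0)|^2+\int_{\mathbb D}|f'(z)|^2(1-|z|^2)^\alpha\,dA(z),$$
and the generalized Nevanlinna counting function $N_{\varphi,\alpha}$, which enters through the identity
$$\int_{\mathbb D}|g(\varphi(z))|^2|\varphi'(z)|^2(1-|z|^2)^\alpha\,dA(z)=\int_{\mathbb D}|g(w)|^2N_{\varphi,\alpha}(w)\,dA(w)$$
for measurable $g\ge 0$. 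This identity is the same one used in the proof of Theorem \ref{TH1}.

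\textbf{Step 1: expressing $\|D_\varphi f\|^2$ through $N_{\varphi,\alpha}$.} Apply the identity with $g=|f''|$. This gives
$$\|D_\varphi f\|^2=|f'(\varphi(0))|^2+\int_{\mathbb D}|f''(w)|^2N_{\varphi,\alpha}(w)\,dA(w).$$

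\textbf{Step 2: summing over the monomial basis.} Take the orthonormal basis $e_0=1$ and $e_n=z^n/\|z^n\|$ for $n\ge1$. A polar-coordinates computation gives $\|z^n\|^2=n^2B(n,\alpha+1)$, so $\|z^n\|^2\asymp n^{1-\alpha}$. Since all terms are nonnegative, Tonelli/monotone convergence allows us to interchange sum and integral:
$$\sum_n\|D_\varphi e_n\|^2=\sum_n|e_n'(\varphi(0))|^2+\int_{\mathbb D}\Big(\sum_{n\ge2}\frac{n^2(n-1)^2|w|^{2(n-2)}}{\|z^n\|^2}\Big)N_{\varphi,\alpha}(w)\,dA(w).$$
The first sum is $\sum_n n^2|\varphi(0)|^{2(n-1)}/\|z^n\|^2$. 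It converges because $|\varphi(0)|<1$, so it is a finite constant that does not affect the criterion.

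\textbf{Step 3: estimating the kernel sum.} The inner sum is comparable to $\sum_n n^{3+\alpha}|w|^{2n}$ for $|w|$ near $1$. The standard estimate $\sum_n n^{\beta-1}r^n\asymp(1-r)^{-\beta}$, applied with $\beta=\alpha+4$, gives comparability with $(1-|w|^2)^{-(\alpha+4)}$. On a fixed disk $|w|\le r_0$, both the inner sum and $(1-|w|^2)^{-(\alpha+4)}$ are bounded above and below by positive constants, since the $n=2$ term is a positive constant. Hence the comparability holds on all of $\mathbb D$. It follows that $\sum_n\|D_\varphi e_n\|^2<\infty$ if and only if $\int_{\mathbb D}N_{\varphi,\alpha}(w)(1-|w|^2)^{-(\alpha+4)}\,dA(w)<\infty$.

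\textbf{Main obstacle.} The main point needing care is the two-sided estimate in Step 3, namely $\sum_n n^{3+\alpha}r^n\asymp(1-r)^{-(\alpha+4)}$ uniformly in $r\in(0,1)$. This follows by comparing $n^{3+\alpha}$ with the coefficients $\Gamma(n+\alpha+4)/(\Gamma(n+1)\Gamma(\alpha+4))$ of $(1-r)^{-(\alpha+4)}$, using Stirling's formula. The other ingredient is the validity of the change of variables for nonnegative integrands, which also handles the case where the integral is infinite. Note that $\varphi\in\mathcal D_\alpha$ guarantees that $D_\varphi$ maps polynomials into $\mathcal D_\alpha$, so each $\|D_\varphi e_n\|$ is finite and the computation is meaningful.
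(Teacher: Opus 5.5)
Your proposal is correct and follows essentially the same route as the paper: expand $\sum_n\|D_\varphi e_n\|^2$ in the normalized monomial basis, discard the convergent $\sum_n|e_n'(\varphi(0))|^2$ series, compare the kernel $\sum_{n\ge2} n^2(n-1)^2|w|^{2(n-2)}/\|z^n\|^2$ with $(1-|w|^2)^{-(\alpha+4)}$ via Stirling, and pass to $N_{\varphi,\alpha}$ through the change-of-variables formula. The differences are cosmetic. You change variables before summing (using Tonelli), whereas the paper sums first with $\varphi(w)$ in the kernel. You also work with the integral norm throughout, so you should add one line noting that the Hilbert--Schmidt property is unchanged under an equivalent renorming. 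Your uniform treatment of the kernel comparison near $w=0$ is slightly more careful than the paper's intermediate step $\sum_n n^{\alpha+3}|\varphi(w)|^{2n}$.
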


	Finally, we conclude this article by establishing the norm of $D_{\varphi}$ in the case $\varphi(z)=rz$, where $r \in \mathbb D\setminus \{0\}$.
	\begin{theorem}\label{Th_norm}
		Let $\varphi(z)=rz$ where $r \in \mathbb D\setminus \{0\}.$ Then $$\|D_\varphi\|=\begin{cases}
			(\lfloor x_0\rfloor+1)^{\frac{3-\alpha}{2}}(\lfloor x_0\rfloor+2)^{\frac{\alpha-1}{2}}|r|^{\lfloor x_0\rfloor}, &f(\lfloor x_0\rfloor)<f(\lfloor x_0\rfloor+1) \\
			\lfloor x_0\rfloor^{\frac{3-\alpha}{2}}(\lfloor x_0\rfloor+1)^{\frac{\alpha-1}{2}}|r|^{\lfloor x_0\rfloor-1},& \text{otherwise},\end{cases}$$ 
		where $f(x)=x^{\frac{3-\alpha}{2}}(x+1)^{\frac{\alpha-1}{2}}|r|^{x-1}$ on $(1,\infty)$ and $$x_0=\frac{-(1+\log|r|)-\sqrt{(1+\log|r|)^2-2(3-\alpha)\log|r|}}{2 \log|r| }.$$
	\end{theorem}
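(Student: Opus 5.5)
Since $\varphi(z)=rz$ maps monomials to monomials, the plan is to show that $D_\varphi$ is a weighted backward shift with respect to the orthogonal basis $\{z^n\}_{n\ge 0}$ of $\mathcal D_\alpha$. Its norm is then the supremum of the shift weights, and that supremum is found by elementary calculus. The point that needs care is the exact normalization of $\|z^n\|_{\mathcal D_\alpha}$ used in the paper, since the shape of $f$ depends on it.

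\emph{Step 1 (weighted shift).} We have $D_\varphi z^n = n r^{n-1} z^{n-1}$ for $n\ge 1$ and $D_\varphi 1=0$. For $f=\sum a_n z^n$ this gives
$$D_\varphi f=\sum_{n\ge1} n r^{n-1}a_n z^{n-1}.$$
The monomials are mutually orthogonal in $\mathcal D_\alpha$, so
$$\|D_\varphi f\|^2=\sum_{n\ge 1} n^2|r|^{2(n-1)}|a_n|^2\|z^{n-1}\|^2 \le \Bigl(\sup_{n\ge1}\lambda_n^2\Bigr)\|f\|^2, \qquad \lambda_n=\frac{n|r|^{n-1}\|z^{n-1}\|}{\|z^n\|}.$$
Testing on $f=z^n$ shows this bound is sharp, so $\|D_\varphi\|=\sup_{n\ge1}\lambda_n$. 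We then compute $\|z^n\|^2$ from the norm of $\mathcal D_\alpha$ fixed earlier in the paper, using a Beta-integral in polar coordinates. We expect the ratio to reduce to $\|z^{n-1}\|/\|z^n\|=(n/(n+1))^{(1-\alpha)/2}$, which corresponds to weights of the form $\|z^n\|^2\asymp (n+1)^{1-\alpha}$ in the paper's normalization. With this, $\lambda_n=n^{\frac{3-\alpha}{2}}(n+1)^{\frac{\alpha-1}{2}}|r|^{n-1}=f(n)$. Confirming that normalization is the first thing to check.

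\emph{Step 2 (maximizing $f$).} Put $L=\log|r|<0$ and $g=\log f$, so
$$g'(x)=\frac{3-\alpha}{2x}+\frac{\alpha-1}{2(x+1)}+L.$$
Multiplying by $2x(x+1)$ gives the quadratic
$$2Lx^2+2(1+L)x+(3-\alpha)=0.$$
The product of its roots is $(3-\alpha)/(2L)<0$, so there is exactly one positive root. With $L<0$ it is the root with the minus sign in front of the square root, namely $x_0$. Moreover
$$g''(x)=-\frac{3-\alpha}{2x^2}+\frac{1-\alpha}{2(x+1)^2}<0,$$
because $3-\alpha>1-\alpha$ and $x<x+1$. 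Hence $f$ increases on $(0,x_0]$ and decreases on $[x_0,\infty)$. The supremum of $f$ over the integers is therefore attained at $\lfloor x_0\rfloor$ or $\lfloor x_0\rfloor+1$, whichever gives the larger value. Comparing $f(\lfloor x_0\rfloor)$ with $f(\lfloor x_0\rfloor+1)$ yields exactly the two cases of the displayed formula.

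\emph{Step 3 (obstacle).} The main obstacle is bookkeeping in the edge cases. We must handle $x_0<1$, where $\lfloor x_0\rfloor=0$ and the second case would evaluate $f(0)$, which is not one of the admissible indices $n\ge1$. In that regime we must verify that the first case applies, giving the value $f(1)$ consistent with the formula. We also need to treat ties $f(\lfloor x_0\rfloor)=f(\lfloor x_0\rfloor+1)$, which fall into the ``otherwise'' branch harmlessly. Finally, we must make sure the normalization of $\|z^0\|$ (the $|f(0)|^2$ term) does not alter $\lambda_1$. We would check these cases directly, using the monotonicity established in Step 2.
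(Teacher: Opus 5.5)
Your proposal is correct and is essentially the paper's proof: $D_\varphi$ acts as the weighted backward shift $e_n\mapsto f(n)e_{n-1}$ on the orthonormal basis $e_n=(n+1)^{\frac{\alpha-1}{2}}z^n$, so $\|D_\varphi\|=\sup_{n\ge1}f(n)$. Your concavity argument for $\log f$ and your check of the case $x_0<1$ (which occurs exactly when $|r|<e^{-(5-\alpha)/4}$, and where the formula still returns $f(1)$ because $f(0)=0<f(1)$) are in fact more careful than the paper, which simply asserts $x_0\in(1,\infty)$. The one correction is that $\|z^n\|^2=(n+1)^{1-\alpha}$ holds exactly by the defining inner product of $\mathcal D_\alpha$, so you should not compute it by a Beta integral: the integral expression is only an equivalent norm, it gives $\|z^n\|^2=n^2\Gamma(n)\Gamma(\alpha+1)/\Gamma(n+\alpha+1)$ for $n\ge1$, and that would change the weights and hence the value of the norm.
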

	To highlight the applicability of the norm computation, we provide several illustrative examples. 
	
	This article is organized as follows. In Section \ref{S0}, we recall the necessary definitions and preliminary results on weighted Dirichlet spaces and the generalized Nevanlinna counting function. Section \ref{S1} is devoted to the characterization of boundedness and compactness of composition–differentiation operators on these spaces. We also obtain estimates for the essential norm of such operators. Furthermore, we give a characterization of Hilbert–Schmidt composition–differentiation operators on $\mathcal{D}_{\alpha}$
	and conclude by determining the norm of $D_{\varphi}$ for specific choices of $\varphi.$ 
	
	\section{Preliminaries}\label{S0}
	
	\noindent 
	
	\subsection{\normalfont Weighted Dirichlet spaces}
	For $\alpha \in \mathbb{R},$ the weighted Dirichlet spaces $\mathcal{D}_{\alpha}$ are defined as
	\[\mathcal{D}_{\alpha}=\left\{f \in \mathcal H(\mathbb{D}) : f(z)=\sum_{n=0}^{\infty}a_nz^n, \sum_{n=0}^{\infty}(n+1)^{1-\alpha}|a_n|^2< \infty \right\}.\]
	The weighted Dirichlet spaces $\mathcal D_{\alpha}$ are weighted Hardy spaces $H^2(\beta)$ with weight sequence $\beta(n)=(n+1)^{\frac{1-\alpha}{2}}.$ Each $\mathcal{D}_{\alpha}$ is a separable Hilbert space with inner product 
	\[\langle f,g \rangle_{\mathcal{D}_{\alpha}}=\sum_{n=0}^{\infty}(n+1)^{1-\alpha}a_n\overline{b}_n,\]
	where $f(z)=\sum\limits_{n=0}^{\infty}a_nz^n$ and $g(z)=\sum\limits_{n=0}^{\infty}b_nz^n.$ Moreover, these spaces are reproducing kernel Hilbert spaces, with reproducing kernel at $w \in \mathbb D$ given by
	\begin{align}\label{repker}
		k^{\alpha}_w(z)=
		\sum_{n=0}^{\infty}\frac{(\overline{w}z)^n}{(n+1)^{1-\alpha}},~~z \in \mathbb D.
	\end{align}
	%Furthermore, $Hol(\overline{\mathbb{D}})$ is a dense subset of $\mathcal{D}_{\alpha}.$
	The kernel function for evaluation of the first derivative is given by 
	$$k_w^{\alpha^{(1)}}(z)=\frac{d}{d\overline{w}}k_w^\alpha(z)=\sum_{n=1}^{\infty}\frac{n\overline{w}^{(n-1)}z^n}{(n+1)^{1-\alpha}}$$ and 
	$$f'(w)=\langle f, k_w^{\alpha^{(1)}} \rangle_{\mathcal D_\alpha},$$
	for any $f\in\mathcal D_\alpha,$ see \cite[p. 20]{CM_Book_95}. In particular,
	$$\|k_w^{\alpha^{(1)}}\|_{\mathcal D_\alpha}=\sqrt{(k_w^{\alpha^{(1)}})'(w)}=\sqrt{\sum_{n=2}^{\infty}\frac{n^2|w|^{2(n-1)}}{(n+1)^{1-\alpha}}}.$$ 
	%Throughout the article, we consider only the case $\alpha \in (0,1)$. 

	The normalized Lebesgue area measure on $\mathbb{D}$ is denoted by $dA,$ and for $\alpha>-1$ the finite measure $dA_{\alpha}$ on $\mathbb{D}$ is given by 
	\[dA_{\alpha}(z)=(1-|z|^2)^{\alpha}dA(z).\]
	Recall that for $\alpha>-1$ the weighted Bergman spaces $A_{\alpha}^2$ are defined by
	\[A^2_{\alpha}=\left\{f \in Hol(\mathbb{D}) : \int_{\mathbb{D}}|f(z)|^2dA_{\alpha}(z)<\infty \right\}.\]
	Throughout this article, the notation $A \lesssim B$ indicates that $A \leq C B$ for some independent finite constant $C>0$, while $A \cong B$ denotes that both $A \lesssim B$ and $B \lesssim A$ hold.
	Stirling's formula shows that if $f \in A^2_{\alpha}$ with $f(z)=\sum\limits_{n=0}^{\infty}a_nz^n$ then 
	\begin{align*}
		\|f\|^2_{A^2_{\alpha}}\cong \sum_{n=0}^{\infty}(n+1)^{-1-\alpha}|a_n|^2.
	\end{align*}
	Thus, $$\|f\|^2_{A^2_{\alpha}}\cong |f(0)|^2+\|f'\|^2_{A^2_{\alpha+2}}~~\text{for all}~~\alpha>-1.$$
	For $\alpha \in (-1,1)$ and $f\in\mathcal D_\alpha$, we use equivalent norm for $\mathcal{D}_{\alpha},$ namely
	\begin{align}
		\|f\|^2_{\mathcal{D}_{\alpha}} \cong 
		|f(0)|^2+\int_{\mathbb{D}}|f'(z)|^2dA_{\alpha}(z)\,\,\,\, .
	\end{align}
	Further details on weighted Dirichlet spaces can be found in \cite{CM_Book_95, MS_CJM_1986}.

	\subsection{\normalfont Generalized Nevanlinna counting function}
	
	For an analytic self-map $\varphi$ of $\mathbb D$ and $\alpha>0,$ the generalized  Nevanlinna counting function of $\varphi$ is defined as
	\begin{align*}
		N_{\varphi, \alpha}(w)=\sum_{z:\varphi(z)=w}(1-|z|^2)^{\alpha}, \,\, w \in \mathbb D\setminus \{\varphi(0)\},
	\end{align*}
	where the sum is interpreted as being zero if $w \notin \varphi(\mathbb D).$  
	
	Change of variable formula \cite[Prop. 2.1]{A_PAMS_1992}: Let $\varphi$ be a analytic self-map of $\mathbb D$ and $f$ is nonnegative on $\mathbb D,$ then
	\begin{align}\label{COV}
		\int_\mathbb D f(\varphi(z))|\varphi'(z)|^2(1-|z|^2)^{\alpha}dA(z)=\int_\mathbb D f(z) N_{\varphi, \alpha}(z)dA(z).
	\end{align}

	Let $\varphi$ be a analytic self-map of $\mathbb D$ and $\alpha \in (0,1).$ Then 
	\begin{align}\label{NP1}
		N_{\varphi, \alpha}(w) \lesssim \frac{1}{|\mathbb D_{\delta}(z)|}\int_{\mathbb D_{\delta}(z)} N_{\varphi, \alpha}(z)dA(z),
	\end{align}
	where $z \in \mathbb D\setminus \{\varphi(0)\}$ and $\mathbb D_{\delta}(z)$ is the Euclidean disc $\{w \in \mathbb D : |w-z|<\delta (1-|z|)\}$ with $\delta \in (0,1)$ contained in $\mathbb D\setminus \{\varphi(0)\},$ see \cite[Prop. 2.1]{PP_JMAAA_2013}.
	
	Here we note that the property \ref{NP1} of $N_{\varphi, \alpha}$ plays a prominent role in the proofs of our results, but $N_{\varphi, \alpha}$ does not satisfy this property for $\alpha = 0$ (in which case it becomes the classical counting function $n_\varphi(w) = \#\{\varphi^{-1}(w)\}$). 
	%For this reason, we restrict our analysis to $\alpha \in (0,1)$.

	\section{Main Results}\label{S1}
	We begin with the proof of Theorem \ref{TH1}, which provides a function-theoretic characterization of the boundedness of composition-differentiation operator $D_\varphi$ on $\mathcal{D}_{\alpha}$.

	\begin{proof}[Proof of Theorem \ref{TH1}]
		Suppose that $D_{\varphi}$ is bounded on $\mathcal{D}_{\alpha}.$ 
		%Then 
		% \begin{align*}
			%   \|D_{\varphi}f\|_{\mathcal{D}_{\alpha}}\lesssim \|f\|_{\mathcal{D}_{\alpha}}~~\text{for all $f \in \mathcal{D}_{\alpha}.$}
			%\end{align*}
			For $w \in \mathbb D,$ let us consider the functions
			\begin{align*}
				f_w(z)=(1-|w|^2)^{\frac{2+\alpha}{2}}\int_0^z\frac{d\xi}{(1-\bar{w}\xi)^{\alpha+2}}\,\,\,\,\,\text{for all $z \in \mathbb D.$}
			\end{align*}
			It follows from \cite[Lemma 3.10]{ZHU_BOOK} that $f_w \in \mathcal{D}_{\alpha}$ with $\sup\limits_{w \in \mathbb D}\|f_w\|_{\mathcal{D}_{\alpha}} \lesssim 1.$ Now, applying the change of variables formula, we obtain
			\begin{align*}
				\|D_{\varphi}f_w\|^2_{\mathcal{D}_{\alpha}} \cong |f_w'(\varphi(0))|^2+\int_{\mathbb D}|f_w''(z)|^2N_{\varphi,\alpha}(z)dA(z).
			\end{align*}
			The boundedness of $D_{\varphi}$ implies that 
			\begin{align*}
				|w|^2\int_{\mathbb D}\frac{(1-|w|^2)^{2+\alpha}}{|1-\bar{w}z|^{6+2\alpha}}N_{\varphi,\alpha}(z)dA(z) \lesssim 1.
			\end{align*}
			For $w \in \mathbb D$ satisfying $|w|> \frac 12 (1+|\varphi(0)|),$  consider the Euclidean disc 
			$\mathbb D_{\frac 12}(w).$
			Clearly, $\varphi(0) \notin \mathbb D_{\frac 12}(w)$ and for all $z \in \mathbb D_{\frac 12}(w),$
			$$|1-\bar{w}z| \cong (1-|w|^2)\cong (1-|w|).$$  Now, from \eqref{NP1} we get
			\begin{align*}
				N_{\varphi,\alpha}(w) &\lesssim \frac{4}{(1-|w|)^2}\int_{\mathbb D_{\frac 12}(w)}N_{\varphi,\alpha}(z)dA(z)\\
				& \lesssim \int_{\mathbb D_{\frac 12}(w)}\frac{(1-|w|^2)^{4+2\alpha}}{|1-\bar{w}z|^{6+2\alpha}}N_{\varphi,\alpha}(z)dA(z)\\
				& \lesssim \int_{\mathbb D}\frac{(1-|w|^2)^{4+2\alpha}}{|1-\bar{w}z|^{6+2\alpha}}N_{\varphi,\alpha}(z)dA(z)
				\lesssim \frac{(1-|w|^2)^{\alpha+2}}{|w|^2}.
			\end{align*}
			This implies that 
			\begin{align}\label{T1E1}
				\sup\left\{ \frac{N_{\varphi, \alpha}(w)}{(1-|w|^2)^{\alpha+2}} :|w|>\frac 12 (1+|\varphi(0)|) \right\}< \infty.
			\end{align}
			It follows from a standard regularization argument in the potential theory \cite[p. 51]{R_BOOK_1995} that there exists a nonnegative subharmonic function $u_{\alpha}$ on $\mathbb D\setminus \{\varphi(0)\}$ such that $N_{\varphi, \alpha} \leq u_{\alpha}$ everywhere on $\mathbb D\setminus \{\varphi(0)\}$ and $N_{\varphi, \alpha} = u_{\alpha}$ almost everywhere on $\mathbb D\setminus \{\varphi(0)\}.$ The function $u_{\alpha}$ is precisely the upper semi-continuous regularization of $N_{\varphi, \alpha}$. As upper semi-continuous functions are bounded above on compact sets, we obtain
			\begin{align}\label{T1E2}
				&\sup\left\{ \frac{N_{\varphi, \alpha}(w)}{(1-|w|^2)^{\alpha+2}} :|w|\leq \frac 12 (1+|\varphi(0)|) \right\}\nonumber\\
				&\leq \frac{2^{\alpha}}{(1-|\varphi(0)|)^{\alpha}}\sup\left\{ u_{\alpha}{(w)} :|w|\leq \frac 12 (1+|\varphi(0)|) \right\}< \infty.
			\end{align}
			Therefore, combining \eqref{T1E1} and \eqref{T1E2}, we get the desired result.
			
			Conversely, suppose that
			$\sup \left\{\frac{N_{\varphi, \alpha}(w)}{(1-|w|^2)^{\alpha+2}}: w \in \mathbb D\setminus\{\varphi(0)\} \right\}<\infty.$ Then for any $f \in \mathcal{D}_{\alpha},$ we have
			\begin{align}\label{T1E3}
				\int_{\mathbb D}|f''(z)|^2N_{\varphi,\alpha}(z)dA(z) \lesssim \|f''\|^2_{A^2_{\alpha+2}} \lesssim \|f\|^2_{\mathcal{D}_{\alpha}}.
			\end{align}
			Moreover, we have
			\begin{align}\label{T1E4}
				|f'(\varphi(0))|=|\langle f, k_{\varphi(0)}^{\alpha^{(1)}}\rangle_{\mathcal{D}_{\alpha}}|\leq \|f\|_{\mathcal{D}_{\alpha}}\|k_{\varphi(0)}^{\alpha^{(1)}}\|_{\mathcal{D}_{\alpha}}.
			\end{align}
			Therefore, combining \eqref{T1E3} and \eqref{T1E4} with the change of variable formula, we obtain
			\begin{align*}
				\|D_{\varphi}f\|_{\mathcal{D}_{\alpha}}\lesssim \|f\|_{\mathcal{D}_{\alpha}}~~\text{for all $f \in \mathcal{D}_{\alpha}.$}
			\end{align*}
			This completes the proof.
		\end{proof}
		
		In view of Theorem \ref{TH1} and the univalence of $\varphi,$ which implies $N_{\varphi, \alpha}(w)=(1-|\varphi^{-1}(w)|^2)^{\alpha},$ the following result is immediate.
		\begin{cor}\label{COR_TH_11}
			Let $\varphi\in\mathcal D_\alpha$ be a univalent holomorphic self-map of $\mathbb D.$ Then
			$D_\varphi$ is bounded on $\mathcal{D}_{\alpha}$ if and only if  
			\begin{eqnarray*}
				\sup\limits_{w\in\mathbb D}\frac{(1-|w|^2)^\alpha}{(1-|\varphi(w)|^2)^{\alpha+2}}<\infty.
			\end{eqnarray*}
		\end{cor}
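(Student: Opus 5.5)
The plan is to read off the condition from Theorem \ref{TH1} by computing $N_{\varphi,\alpha}$ explicitly for univalent $\varphi$. Since $\varphi\in\mathcal D_\alpha$ is an analytic self-map of $\mathbb D$, Theorem \ref{TH1} applies. It says $D_\varphi$ is bounded on $\mathcal D_\alpha$ if and only if
\begin{align*}
S:=\sup\left\{\frac{N_{\varphi,\alpha}(w)}{(1-|w|^2)^{\alpha+2}} : w\in\mathbb D\setminus\{\varphi(0)\}\right\}<\infty .
\end{align*}
So it suffices to show that $S<\infty$ exactly when $T:=\sup_{z\in\mathbb D}\frac{(1-|z|^2)^\alpha}{(1-|\varphi(z)|^2)^{\alpha+2}}<\infty$.

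First I evaluate $N_{\varphi,\alpha}$. By univalence, each $w\in\varphi(\mathbb D)$ has exactly one preimage $z=\varphi^{-1}(w)$, so $N_{\varphi,\alpha}(w)=(1-|\varphi^{-1}(w)|^2)^\alpha$. For $w\notin\varphi(\mathbb D)$ the defining sum is empty, so $N_{\varphi,\alpha}(w)=0$. Hence points outside $\varphi(\mathbb D)$ contribute $0$ to $S$ and can be dropped.

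Next I reparametrize the remaining points by $w=\varphi(z)$. As $w$ ranges over $\varphi(\mathbb D)\setminus\{\varphi(0)\}$, the point $z$ ranges bijectively over $\mathbb D\setminus\{0\}$, again by univalence. This gives
\begin{align*}
S=\max\left\{0,\ \sup_{z\in\mathbb D\setminus\{0\}}\frac{(1-|z|^2)^\alpha}{(1-|\varphi(z)|^2)^{\alpha+2}}\right\}.
\end{align*}

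The supremum above differs from $T$ only by the single point $z=0$. That point contributes the finite number $(1-|\varphi(0)|^2)^{-(\alpha+2)}$, which is finite because $|\varphi(0)|<1$. Therefore $S\le T\le \max\{S,(1-|\varphi(0)|^2)^{-(\alpha+2)}\}$, so $S<\infty$ if and only if $T<\infty$. This is the stated criterion.

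There is no real obstacle. The only points needing care are the excluded point $\varphi(0)$ (equivalently $z=0$) and the bijectivity of the change of variables, both of which are handled above.
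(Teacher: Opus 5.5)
Your proposal is correct and follows the same route as the paper. The paper simply invokes Theorem \ref{TH1} together with the identity $N_{\varphi,\alpha}(w)=(1-|\varphi^{-1}(w)|^2)^\alpha$ for univalent $\varphi$ and calls the result immediate. Your substitution $w=\varphi(z)$, with the excluded point $\varphi(0)$ (equivalently $z=0$) contributing only the finite value $(1-|\varphi(0)|^2)^{-(\alpha+2)}$, supplies exactly the details the paper leaves implicit.
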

		Let $\text{Aut}(\mathbb D)$ denote the group of bi-holomorphic self-maps of $\mathbb D.$ Every $\varphi\in\text{Aut}(\mathbb D)$ can be written in the form $$\varphi(w)=\eta\frac{\beta-w}{1-\bar{\beta}w},~\eta\in\partial\mathbb D, \beta\in\mathbb D.$$ Throughout this work, we write $\varphi_\beta$ for the automorphism defined by $\varphi_\beta(w)=\frac{\beta-w}{1-\bar{\beta}w}.$
		
		\begin{remark}
			For $\varphi_\beta(w)\in\text{Aut}(\mathbb D),$ a direct computation shows that
			$$1-|\varphi_\beta(w)|^2=\frac{(1-|\beta|^2)(1-|w|^2)}{|1-\overline{\beta}w|^2}.$$ Consequently,
			$$\frac{(1-|w|^2)^\alpha}{(1-|\varphi_\beta(w)|^2)^{\alpha+2}}\cong\frac{(1-|\beta|^2)^{\alpha+2}}{(1-|w|^2)^2}\to\infty\,\,\, \text{as}~ |w|\to 1^-.$$ Therefore, by Corollary \ref{COR_TH_11}, $D_{\varphi_\beta}$ fails to be bounded on $\mathcal D_\alpha.$
		\end{remark}
		
		We now proceed to the proof of Theorem \ref{TH2}, which provides estimates for the essential norm of the composition-differentiation operator $D_{\varphi}$ on $\mathcal D_\alpha.$ The proof proceeds by establishing separate upper and lower estimates, the former is obtained using the following results.

		\begin{lemma}\cite[Prop. 5.1]{S_AM_1987}\label{L1}
			Suppose $T$ is a bounded linear operator on a Hilbert space $\mathcal H.$ Let $\{K_n\}$ be a sequence of a compact self-adjoint operator on $\mathcal H,$ and $R_n=I-K_n.$ Suppose $\|R_n\|=1$ for each $n,$ and $\|R_nx\|_\mathcal H \to 0$ for each $x \in \mathcal H.$ Then $\|T\|_e=\lim\limits_{n \to \infty}\|TR_n\|.$
		\end{lemma}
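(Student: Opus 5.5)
The statement is Shapiro's essential-norm formula: with $R_n=I-K_n$ as in Lemma \ref{L1}, $\|T\|_e=\lim_n\|TR_n\|$. I would prove it by comparing two inequalities between $\lim\|TR_n\|$ and $\|T\|_e$.

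\textbf{Upper bound: $\limsup_n\|TR_n\|\le\|T\|_e$.} First, $\|T\|_e\le\|TR_n\|$ for every $n$ is immediate. Indeed, $T-TR_n=TK_n$ is compact, so $\|T\|_e\le\|T-TK_n\|=\|TR_n\|$. Hence $\|T\|_e\le\liminf_n\|TR_n\|$.

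For the reverse direction, fix an arbitrary compact operator $K$ on $\mathcal H$. Then
\[
\|TR_n\|\le\|(T-K)R_n\|+\|KR_n\|\le\|T-K\|+\|KR_n\|,
\]
using $\|R_n\|=1$. It therefore suffices to show $\|KR_n\|\to0$ for every compact $K$. Once this holds, $\limsup_n\|TR_n\|\le\|T-K\|$ for all compact $K$, and taking the infimum gives $\limsup_n\|TR_n\|\le\|T\|_e$. Combined with the first inequality, the limit exists and equals $\|T\|_e$.

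\textbf{Main step: $\|KR_n\|\to0$.} Since $K_n$ is self-adjoint, $R_n$ is self-adjoint, so $\|KR_n\|=\|(KR_n)^*\|=\|R_nK^*\|$, and $K^*$ is compact. So I need the statement: if $R_n\to0$ strongly and $\sup_n\|R_n\|\le1$, then $\|R_nL\|\to0$ for every compact $L$. This is the standard argument from uniform boundedness plus total boundedness. Given $\varepsilon>0$, cover the relatively compact set $L(\text{unit ball})$ by finitely many balls of radius $\varepsilon$ centered at $y_1,\dots,y_m$. Choose $N$ so that $\|R_ny_j\|<\varepsilon$ for all $j\le m$ and $n\ge N$. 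Then for any $\|x\|\le1$ there is a $j$ with $\|Lx-y_j\|<\varepsilon$, and
\[
\|R_nLx\|\le\|R_n\|\,\|Lx-y_j\|+\|R_ny_j\|<2\varepsilon .
\]
Hence $\|R_nL\|\le2\varepsilon$ for $n\ge N$, as required.

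The only point needing care is that strong convergence $R_n\to0$ gives uniform convergence only when the compact operator sits on the right, i.e.\ in the form $R_nL$. Self-adjointness of $K_n$ is exactly what converts $KR_n$ into this form.
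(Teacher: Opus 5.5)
Your proof is correct, and it is essentially the standard argument behind Shapiro's Proposition 5.1, which the paper cites without giving a proof. Compactness of $TK_n$ gives $\|T\|_e\le\liminf_n\|TR_n\|$. For the reverse inequality, self-adjointness of $R_n$ turns $\|KR_n\|$ into $\|R_nK^*\|$, and $R_n\to 0$ uniformly on the relatively compact set $K^*(\text{unit ball})$.
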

		Let $K_n$ be the operator that takes $f$ to the $n$-th partial sum of its Taylor series, i.e. 
		$$K_nf(z)=\sum \limits^{n}_{k=0}a_kz^k,~~\text{where $f(z)=\sum \limits^{\infty}_{k=0}a_kz^k$}.$$
		Let $R_n$ be the orthogonal projection of $\mathcal H$ onto $z^n \mathcal H$ given by $R_n=I-K_n,$ see \cite[p. 133]{CM_Book_95}. In the following lemma, part $(i)$ appears in \cite[p. 133]{CM_Book_95}, while part $(ii)$ can be verified by a similar argument.

		\begin{lemma}\label{L2}
			For each $r \in (0,1)$ and $f \in \mathcal H,$ we have
			\begin{align*}
				&(i)~~|(R_nf)'(z)| \leq \|f\|_\mathcal H \left( \sum \limits^{\infty}_{k=n}\frac{k^2}{\beta^2(k)}r^{2k-2}\right)~~\text{for $|z|\leq r$},\\
				&(ii)~~|(R_nf)''(z)| \leq \|f\|_\mathcal H \left( \sum \limits^{\infty}_{k=n}\frac{k^2(k-1)^2}{\beta^2(k)}r^{2k-4}\right)~~\text{for $|z|\leq r$},
			\end{align*}
			where $\beta(k)=\|z^k\|_\mathcal H.$
		\end{lemma}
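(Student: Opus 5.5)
The plan is to derive the pointwise estimate for $(R_nf)'$ and $(R_nf)''$ from the Taylor coefficients, using the orthogonality of the monomials in $\mathcal H$. Write $f(z)=\sum_{k\ge 0}a_kz^k$, so that $\|f\|_{\mathcal H}^2=\sum_k|a_k|^2\beta^2(k)$. Since $R_n=I-K_n$, we have
$$R_nf(z)=\sum_{k\ge n+1}a_kz^k .$$
Differentiating termwise (legitimate on $|z|\le r<1$) gives
$$(R_nf)'(z)=\sum_{k\ge n+1}k a_k z^{k-1},\qquad (R_nf)''(z)=\sum_{k\ge n+1}k(k-1)a_kz^{k-2}.$$

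Next I apply Cauchy--Schwarz after inserting $\beta(k)/\beta(k)$ in each term. For $|z|\le r$ this yields
$$|(R_nf)'(z)|\le\Big(\sum_{k>n}|a_k|^2\beta^2(k)\Big)^{1/2}\Big(\sum_{k\ge n}\frac{k^2}{\beta^2(k)}r^{2k-2}\Big)^{1/2}\le \|f\|_{\mathcal H}\,S_n'(r)^{1/2}.$$
Here $S_n'(r)$ denotes the series in (i); enlarging the index range from $k\ge n+1$ to $k\ge n$ only increases it. The same computation with $k(k-1)z^{k-2}$ gives part (ii) with $S_n''(r)^{1/2}$. These are exactly the argument of \cite[p.~133]{CM_Book_95} and its second-derivative analogue.

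The final step is to pass from $S^{1/2}$ to the stated bound $S$. This uses $S^{1/2}\le S$, which holds precisely when $S\ge 1$. This step is the main obstacle, and I do not expect it to go through in general. For fixed $r$, $S_n'(r)\to0$ as $n\to\infty$, so eventually $S<1$.

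Testing $f=z^{n+1}/\beta(n+1)$ at $z=r$ shows the difficulty concretely. Here $|(R_nf)'(r)|=x:=(n+1)r^n/\beta(n+1)$. Meanwhile $S_n'(r)$ is comparable to $x^2$ up to factors like $(1-r^2)^{-1}$ and polynomial factors in $n$, and $r^n$ decays geometrically. Hence $S_n'(r)<x$ for large $n$, so the bound without the square root fails there.

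So the proposal is as follows. I would prove (i) and (ii) as stated on the range where the series is at least $1$; this covers every $n$ when $r$ is close to $1$, since the series grows like $(1-r)^{-(3-\alpha)}$. Separately, I would record that what actually holds for all $n$, and what suffices in the proof of Theorem \ref{TH2}, is the square-root version. In that proof the lemma is only used to conclude $\sup_{|z|\le r}|(R_nf)^{(j)}(z)|\to0$ uniformly on the unit ball as $n\to\infty$, and that follows equally from $S^{1/2}\to0$.
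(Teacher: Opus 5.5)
Your Cauchy--Schwarz argument on the Taylor coefficients is the argument of \cite[p.~133]{CM_Book_95}. The paper defers to that source without giving a proof, so your route is essentially the paper's. Your diagnosis is also correct: the argument yields the bound with $\bigl(\sum_{k\ge n}\cdots\bigr)^{1/2}$, and the printed version without the square root is false for fixed $r$ and large $n$.

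Your test function $f=z^{n+1}/\beta(n+1)$ in $\mathcal D_\alpha$ is a valid counterexample. There the series in (i) is at most $C(r)\,n^{1+\alpha}r^{2n}$, while $|(R_nf)'(r)|\cong n^{(1+\alpha)/2}r^n$. The same function works for (ii).

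The paper itself only ever uses the square-root form. In the proof of Theorem~\ref{TH2} it writes $|(R_nf)'(\varphi(0))|^2\le\|f\|^2_{\mathcal D_\alpha}\sum_{k\ge n}k^{1+\alpha}|\varphi(0)|^{2k-2}$, and the analogous squared bound for $\sup_{|z|\le r}|(R_nf)''(z)|^2$. So the missing root is a misprint with no downstream effect.

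There is one slip in your proposed salvage. No single $r<1$ makes the series at least $1$ for every $n$, because, as you observe yourself, the series tends to $0$ as $n\to\infty$ for each fixed $r$. Also, for $\mathcal D_\alpha$ its growth as $r\to1^-$ is of order $(1-r^2)^{-(2+\alpha)}$ in (i) and $(1-r^2)^{-(4+\alpha)}$ in (ii), not $(1-r)^{-(3-\alpha)}$. Drop that part and state and prove the lemma with the square root.
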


		We are now in a position to prove Theorem \ref{TH2}.
		
		\begin{proof}[Proof of Theorem \ref{TH2}]
			Suppose that $D_{\varphi}$ is bounded on $\mathcal{D}_{\alpha}.$ Then by Lemma \ref{L1}, the essential norm of $D_{\varphi}$ is given by
			\begin{align*}
				\|D_{\varphi}\|_e = \lim \limits_{n \to \infty}\left\{ \sup\limits\|D_{\varphi}R_nf\|_{\mathcal{D}_{\alpha}} : f \in \mathcal{D}_{\alpha}, \|f\|_{\mathcal{D}_{\alpha}} \leq 1 \right\}.
			\end{align*}
			An application of the change of variables formula yields
			\begin{align}\label{T2E1}
				\|D_{\varphi}R_nf\|^2_{\mathcal{D}_{\alpha}}
				&\cong |(R_nf)'(\varphi(0))|^2+
				\int_{\mathbb D}|(R_nf)''(z)|^2N_{\varphi, \alpha}(z)dA(z).
				%& =|(R_nf)'(\varphi(0))|^2+
				%\int_{r\mathbb D}|(R_nf)''(z)|^2N_{\varphi, \alpha}(z)dA(z)\\
				% &+ \int_{\mathbb D \setminus r\mathbb D} |(R_nf)''(z)|^2N_{\varphi, \alpha}(z)dA(z).
			\end{align}
			Consequently, by Lemma \ref{L2} $(i)$, we obtain
			\begin{align*}
				|(R_nf)'(\varphi(0))|^2 \leq  \|f\|^2_{\mathcal{D}_{\alpha}} \left( \sum \limits^{\infty}_{k=n}k^{1+\alpha}|\varphi(0)|^{2k-2}\right) \lesssim \sum \limits^{\infty}_{k=n}k^{1+\alpha}|\varphi(0)|^{2k-2},
			\end{align*}
			which tends to $0$ as $n \to \infty.$
			Now, fix $r \in (0,1)$ and decompose the integral in \eqref{T2E1} into two parts: one over the disc $r\mathbb D=\{z \in \mathbb D : |z|<r\}$ and the other over $\mathbb D \setminus r\mathbb D.$ For the first part, we obtain
			\begin{align*}
				\int_{r\mathbb D}|(R_nf)''(z)|^2N_{\varphi, \alpha}(z)dA(z) &\leq \sup\{|(R_nf)''(z)|^2 : {|z|\leq r}\}\int_{\mathbb D}N_{\varphi, \alpha}(z)dA(z)\\
				& \leq \|\varphi\|^2_{\mathcal{D}_{\alpha}} \sup\{|(R_nf)''(z)|^2 : {|z|\leq r}\}.
			\end{align*}
			It follows from Lemma \ref{L2} $(ii)$ that the first integral part is dominated by
			\begin{align*}
				\|\varphi\|^2_{\mathcal{D}_{\alpha}}\|f\|^2_{\mathcal{D}_{\alpha}} \left( \sum \limits^{\infty}_{k=n}k^{3+\alpha}r^{2k-4}\right),
			\end{align*}
			which tends to $0$ as $n \to \infty.$ For the remaining part of the integral, we have
			\begin{align*}
				&\int_{\mathbb D \setminus r\mathbb D} |(R_nf)''(z)|^2N_{\varphi, \alpha}(z)dA(z)\\
				& \leq \sup \left\{\frac{N_{\varphi, \alpha}(w)}{(1-|w|^2)^{\alpha+2}}: r\leq |w|<1 \right\} \int_{\mathbb D \setminus r\mathbb D} |(R_nf)''(z)|^2dA_{\alpha+2}(z)\\
				& \leq \sup \left\{\frac{N_{\varphi, \alpha}(w)}{(1-|w|^2)^{\alpha+2}}: r\leq |w|<1 \right\}\int_{\mathbb D} |(R_nf)''(z)|^2dA_{\alpha+2}(z)\\
				&\lesssim \sup \left\{\frac{N_{\varphi, \alpha}(w)}{(1-|w|^2)^{\alpha+2}}: r\leq |w|<1 \right\}\|R_nf\|^2_{\mathcal{D}_{\alpha}}\\
				& \leq\sup \left\{\frac{N_{\varphi, \alpha}(w)}{(1-|w|^2)^{\alpha+2}}: r\leq |w|<1 \right\}\|f\|^2_{\mathcal{D}_{\alpha}}\\
				&\lesssim \sup \left\{\frac{N_{\varphi, \alpha}(w)}{(1-|w|^2)^{\alpha+2}}: r\leq |w|<1 \right\}.
			\end{align*}
			As the above holds for arbitrary $r \in (0,1),$ we deduce the upper estimate
			\begin{align*}
				\|D_{\varphi}\|^2_{e}\lesssim \limsup\limits_{|w| \to 1^-}\frac{N_{\varphi, \alpha}(w)}{(1-|w|^2)^{\alpha+2}}.
			\end{align*}

			In order to obtain the lower estimate, we consider the functions $\{f_w:$~$w \in \mathbb D\},$ as defined in the proof of Theorem \ref{TH1}. Since $\sup\limits_{w \in \mathbb D}\|f_w\|_{\mathcal{D}_{\alpha}}\lesssim 1$ and  $f_w$ converges to $0$ as $|w| \to 1^-,$ it follows that for every compact operator $K$ on $\mathcal{D}_{\alpha},$~$\|Kf_w\|_{\mathcal{D}_{\alpha}} \to 0$ as $|w| \to 1^-$.  This implies that
			\begin{align*}
				\|D_{\varphi}-K\|_{\mathcal{D}_{\alpha}}&\gtrsim \limsup\limits_{|w| \to 1^-} \|(D_{\varphi}-K)f_w\|_{\mathcal{D}_{\alpha}}\\
				& \geq \limsup\limits_{|w| \to 1^-} \|D_{\varphi}f_w\|_{\mathcal{D}_{\alpha}}-\limsup\limits_{|w| \to 1^-}\|Kf_w\|_{\mathcal{D}_{\alpha}}\\
				& =\limsup\limits_{|w| \to 1^-} \|D_{\varphi}f_w\|_{\mathcal{D}_{\alpha}}.
			\end{align*}
			Since $ \|D_{\varphi}\|_{e}=\inf\{\|D_{\varphi}-K\|_{\mathcal{D}_{\alpha}} : \text{$K$ is compact operator on $\mathcal{D}_{\alpha}$}\},$ taking the infimum over all compact operators $K$ on $\mathcal{D}_{\alpha}$ in the preceding inequality, we obtain
			\begin{align*}
				\|D_{\varphi}\|_{e} \gtrsim \limsup\limits_{|w| \to 1^-} \|D_{\varphi}f_w\|_{\mathcal{D}_{\alpha}}.
			\end{align*}
			Now, by the change of variable formula, we have
			\begin{align*}
				\|D_{\varphi}f_w\|^2_{\mathcal{D}_{\alpha}} &\cong |f_w'(\varphi(0))|^2+|w|^2\int_{\mathbb D}\frac{(1-|w|^2)^{2+\alpha}}{|1-\bar{w}z|^{6+2\alpha}}N_{\varphi,\alpha}(z)dA(z).
			\end{align*}
			As $\{\varphi(0)\}$ is a compact subset of $\mathbb D,$ it follows that $f_w'(\varphi(0)) \to 0$ as $|w|\to 1^-.$ Therefore, by \eqref{NP1} and the estimate $|1-\bar w z|\cong(1-|w|^2)$ for all $z \in \mathbb D_{\frac 12}(w),$ we obtain
			\begin{align*}
				\|D_{\varphi}\|^2_{e} &\gtrsim \limsup\limits_{|w| \to 1^-}|w|^2\int_{\mathbb D}\frac{(1-|w|^2)^{2+\alpha}}{|1-\bar{w}z|^{6+2\alpha}}N_{\varphi,\alpha}(z)dA(z)\\
				& \geq \limsup\limits_{|w| \to 1^-}|w|^2\int_{\mathbb D_{\frac 12}(w)}\frac{(1-|w|^2)^{2+\alpha}}{|1-\bar{w}z|^{6+2\alpha}}N_{\varphi,\alpha}(z)dA(z) \\
				& \gtrsim \limsup\limits_{|w| \to 1^-}\frac{|w|^2}{(1-|w|^2)^{\alpha+2}|\mathbb D_{\frac 12}|}\int_{\mathbb D_{\frac 12}(w)}N_{\varphi,\alpha}(z)dA(z)\\
				& \gtrsim \limsup\limits_{|w| \to 1^-}\frac{N_{\varphi, \alpha}(w)}{(1-|w|^2)^{\alpha+2}},
			\end{align*}
			as desired.
		\end{proof}

		Using Theorem \ref{TH1} and Theorem \ref{TH2}, we establish a characterization of the compactness of $D_{\varphi}$ on $\mathcal{D}_{\alpha}.$
		
		\begin{proof}[Proof of Theorem \ref{THC_3}]
			Suppose that $D_{\varphi}$ is compact on $\mathcal{D}_{\alpha}.$ Then its essential norm vanishes, i.e., $\|D_{\varphi}\|_{e}=0.$ Consequently, by Theorem \ref{TH2}, we obtain $$\lim\limits_{|w| \to 1^-}\frac{N_{\varphi, \alpha}(w)}{(1-|w|^2)^{\alpha+2}}=0.$$
			Conversely, assume that $\lim\limits_{|w| \to 1^-}\frac{N_{\varphi, \alpha}(w)}{(1-|w|^2)^{\alpha+2}}=0.$ First, we show that $D_{\varphi}$ is bounded on $\mathcal{D}_{\alpha}.$ 
			Let $\epsilon=\frac 12.$ Since $\lim\limits_{|w| \to 1^-}\frac{N_{\varphi, \alpha}(w)}{(1-|w|^2)^{\alpha+2}}=0,$ then there exists $0<r_0<1$ such that 
			$$\frac{N_{\varphi, \alpha}(w)}{(1-|w|^2)^{\alpha+2}}<\frac 12\,\,\,\forall~|w|>r_0.$$  
			Again, by the definition of generalized Nevanlinna counting function, $N_{\varphi, \alpha}(w)$ is bounded on the disc $\{w:|w|\leq r_0\}$ and hence $\sup\limits_{|w|\leq r_0, w \neq \varphi(0)}\frac{N_{\varphi, \alpha}(w)}{(1-|w|^2)^{\alpha+2}}<\infty.$ Consequently, $\sup\limits_{w\in\mathbb D\setminus\{\varphi(0)\}}\frac{N_{\varphi, \alpha}(w)}{(1-|w|^2)^{\alpha+2}}<\infty.$ It follows from Theorem \ref{TH1} that $D_{\varphi}$ is bounded on $\mathcal{D}_{\alpha}.$ As  $\|D_{\varphi}\|^2_{e}\cong \limsup\limits_{|w| \to 1^-}\frac{N_{\varphi, \alpha}(w)}{(1-|w|^2)^{\alpha+2}}=0,$ we conclude that $D_{\varphi}$ is compact on $\mathcal{D}_{\alpha}.$ 
		\end{proof}

		The following result is a direct consequence of Theorem \ref{THC_3}.
		
		\begin{cor}\label{Thrr1}
			Let $\varphi\in\mathcal{D}_{\alpha}$ be a univalent analytic self-map of $\mathbb D.$ Then
			$D_\varphi$ is compact on $\mathcal{D}_{\alpha}$ if and only if  
			\begin{eqnarray*}
				\lim\limits_{|w|\to1^-}\frac{(1-|w|^2)^\alpha}{(1-|\varphi(w)|^2)^{\alpha+2}}=0.
			\end{eqnarray*}
		\end{cor}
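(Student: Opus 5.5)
The plan is to derive the corollary from Theorem \ref{THC_3} by rewriting its limit condition for univalent $\varphi$. Since $\varphi$ is univalent, every $w\in\varphi(\mathbb D)\setminus\{\varphi(0)\}$ has exactly one preimage, and we get $N_{\varphi,\alpha}(w)=(1-|\varphi^{-1}(w)|^2)^\alpha$. For $w\notin\varphi(\mathbb D)$ we have $N_{\varphi,\alpha}(w)=0$. So Theorem \ref{THC_3} says that $D_\varphi$ is compact if and only if
$$\lim_{|u|\to1^-,\,u\in\varphi(\mathbb D)}\frac{(1-|\varphi^{-1}(u)|^2)^\alpha}{(1-|u|^2)^{\alpha+2}}=0 .$$
Substituting $u=\varphi(w)$ turns the quotient into $Q(w):=\frac{(1-|w|^2)^\alpha}{(1-|\varphi(w)|^2)^{\alpha+2}}$. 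What remains is to show that "$Q(w)\to0$ as $|\varphi(w)|\to1$" is equivalent to "$Q(w)\to0$ as $|w|\to1$". The two limits are indexed differently, and this equivalence is the main point to check.

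\emph{From $|w|\to1$ to $|\varphi(w)|\to1$.} Assume $Q(w)\to0$ as $|w|\to1^-$, and let $\varphi(w_k)$ be a sequence with $|\varphi(w_k)|\to1$. Pass to a subsequence along which either $|w_k|\to1$, or $w_k$ stays in a compact set $\{|w|\le\rho\}$. The compact case is impossible, because $\varphi$ is bounded away from $\partial\mathbb D$ on $\{|w|\le\rho\}$ (the maximum of $|\varphi|$ there is $<1$). Hence $|w_k|\to1$, so $Q(w_k)\to0$.

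\emph{From $|\varphi(w)|\to1$ to $|w|\to1$.} Assume $Q\to0$ as $|\varphi(w)|\to1$, and take $|w_k|\to1$. Pass to a subsequence. If $|\varphi(w_k)|\to1$, then $Q(w_k)\to0$ by assumption. Otherwise $|\varphi(w_k)|\le c<1$ along a subsequence. Then the denominator of $Q(w_k)$ is bounded below by $(1-c^2)^{\alpha+2}$, while the numerator $(1-|w_k|^2)^\alpha\to0$ because $\alpha>0$. So $Q(w_k)\to0$ in this case as well.

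Combining the two directions with Theorem \ref{THC_3} gives the corollary. The only slightly delicate point is the bookkeeping at $w=\varphi(0)$, i.e.\ at the point $u=\varphi(0)$, which is excluded in Theorem \ref{THC_3}. It is harmless, since only the behaviour near the boundary matters.
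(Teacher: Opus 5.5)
Your proposal is correct and follows the paper's route. The paper states the corollary as a direct consequence of Theorem~\ref{THC_3}, using $N_{\varphi,\alpha}(\varphi(w))=(1-|w|^2)^\alpha$ for univalent $\varphi$. You additionally check that ``$Q(w)\to0$ as $|\varphi(w)|\to1^-$'' is equivalent to ``$Q(w)\to0$ as $|w|\to1^-$'', using $\max_{|w|\le\rho}|\varphi|<1$ in one direction and $\alpha>0$ in the other; the paper leaves this step implicit.
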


		Next, we provide an example demonstrating, via Theorem \ref{THC_3}, that $D_\varphi$ fails to be compact on $\mathcal D_{\alpha}$.

		\begin{example}
			Let $\varphi(z)=\exp\left({\frac{z+1}{z-1}}\right).$ Then $\varphi$ is an analytic self-map of $\mathbb D.$ For $w\in\mathbb D,$ $\exp\left({\frac{z+1}{z-1}}\right)=w$ implies that $z=\frac{a_k+1}{a_k-1}=z_k(\text{say}),$ where $a_k=\log |w|+2\pi i k,~k=0,1,2,\cdots.$ As $z\in\mathbb D,$ $Re(a_k)<0$ implies $\log |w|<0.$ By simple calculation we get 
			$$1-|z_k|^2=\frac{-4Re(a_k)}{|a_k-1|^2}=\frac{-4\log|w|}{(\log|w|-1)^2+4\pi^2k^2}.$$
			Using the estimate $-\log |w|\cong 1-|w|$ as $|w|\to 1^-,$ we get
			\begin{eqnarray*}
				N_{\varphi, \alpha}(w)&=&\sum\limits_{z:\varphi(z)=w} (1-|z|^2)^{\alpha}\\
				&=&\sum_{k=0}^\infty\left(\frac{-4\log|w|}{(\log|w|-1)^2+4\pi^2k^2}\right)^\alpha\\
				&\cong& 4^\alpha(1-|w|)^\alpha\sum\limits_{k=0}^\infty\frac{1}{((\log|w|-1)^2+4\pi^2k^2)^\alpha}.
			\end{eqnarray*}
			It is obvious that for $\alpha \leq \frac 12,$ $\sum\limits_{k=0}^\infty\frac{1}{((\log|w|-1)^2+4\pi^2k^2)^\alpha}$ is divergent.
			Thus,  
			$$\frac{N_{\varphi, \alpha}(w)}{(1-|w|^2)^{\alpha+2}}\cong \frac{4^\alpha}{(1-|w|^2)}\sum\limits_{k=0}^\infty\frac{1}{((\log|w|-1)^2+4\pi^2k^2)^\alpha},$$
			which tends to $\infty$ as $|w|\to 1^-.$ For $\alpha>\frac 12,$ $\sum\limits_{k=0}^\infty\frac{1}{((\log|w|-1)^2+4\pi^2k^2)^\alpha}$ is convergent but $\frac{N_{\varphi, \alpha}(w)}{(1-|w|^2)^{\alpha+2}}\to\infty$ as $|w|\to 1^-.$
			Therefore, $D_\varphi$ is not compact on $\mathcal D_{\alpha}$ for all $\alpha\in(0,1).$
		\end{example}

		In the previous example, it is observed that $\|\varphi\|_\infty=1.$ This leads to the question of whether   $D_\varphi$ fails to be compact on $\mathcal D_{\alpha}$ whenever $\|\varphi\|_\infty=1.$ The answer is negative. In the next example, using lens maps on $\mathbb D,$ we show that even when $\|\varphi\|_\infty=1$, the operator $D_\varphi$ can be compact.
		
		\begin{example}
			Let $\alpha\in(0,1)$ and $\delta\in(0,\frac{\alpha}{\alpha+2}).$ Now, let us consider $\tau_{\delta}(w)=\frac{\sigma(w)^\delta-1}{\sigma(w)^\delta+1},$ where $\sigma(w)=\frac{1+w}{1-w}.$ The maps $\tau_{\delta}$ are univalent analytic self-maps of $\mathbb D$ with $\|\tau_{\delta}\|_{\infty}=1$ and are known as the lens maps on $\mathbb D,$ see \cite[p. 27]{Shaprio_BOOK}. Note that, 
			$$1-|\tau_{\delta}(w)|^2\cong |1-w|^\delta\gtrsim (1-|w|^2)^\delta,~~\text{as}~|w|\to 1^-.$$ It follows that   
			\begin{eqnarray*}
				\frac{(1-|w|^2)^\alpha}{(1-|\tau_{\delta}(w)|^2)^{\alpha+2}}\lesssim(1-|w|^2)^{\alpha-\delta(\alpha+2)}.
			\end{eqnarray*}
			Since $\sup\limits_{w\in\mathbb D\setminus{\{\tau_{\delta}(0)}\}} \frac{(1-|w|^2)^\alpha}{(1-|\tau_{\delta}(w)|^2)^{\alpha+2}}<\infty,$ Corollary \ref{COR_TH_11} implies that $D_{\tau_{\delta}}$ is bounded on $\mathcal D_\alpha$ for all $\delta<\frac{\alpha}{\alpha+2}.$
			As $|w|\to1^-,$ we have $\lim\limits_{|w|\to1^-}\frac{(1-|w|^2)^\alpha}{(1-|\tau_{\delta}(w)|^2)^{\alpha+2}}=0.$ From Corollary \ref{Thrr1}, we conclude that $D_{\tau_{\delta}}$ are compact on $\mathcal D_\alpha$ for all $\delta<\frac{\alpha}{\alpha+2}.$
		\end{example}

		We next investigate whether the condition $\|\varphi\|_{\infty}<1$ ensures the compactness of $D_\varphi$ on $\mathcal D_\alpha$. By an application of Theorem \ref{THC_3}, we obtain an affirmative answer in the following result.

		\begin{proposition}
			Let $\varphi\in\mathcal{D}_{\alpha}$ be an analytic self-map of $\mathbb D.$ If $\|\varphi\|_{\infty}<1$ then $D_\varphi$ is compact on $\mathcal D_\alpha.$
		\end{proposition}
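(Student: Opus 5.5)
The plan is to apply Theorem \ref{THC_3} directly. Put $s=\|\varphi\|_\infty<1$. If $|w|>s$, then $w\notin\varphi(\mathbb D)$. So by the definition of the generalized Nevanlinna counting function (an empty sum is zero), $N_{\varphi,\alpha}(w)=0$ for every $w$ with $s<|w|<1$. Consequently
$$\frac{N_{\varphi,\alpha}(w)}{(1-|w|^2)^{\alpha+2}}=0\quad\text{for all } s<|w|<1,$$
and hence $\lim_{|w|\to1^-}\frac{N_{\varphi,\alpha}(w)}{(1-|w|^2)^{\alpha+2}}=0$. Since $\varphi\in\mathcal D_\alpha$ is assumed, Theorem \ref{THC_3} applies and shows that $D_\varphi$ is compact on $\mathcal D_\alpha$.

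The only point needing care is the boundedness of $D_\varphi$, which the proof of Theorem \ref{THC_3} obtains from the vanishing limit together with local boundedness of $N_{\varphi,\alpha}$ on $\{|w|\le r_0\}$. We may simply quote Theorem \ref{THC_3} as stated. If one wanted to avoid any subtlety about $N_{\varphi,\alpha}$ near $\varphi(0)$, there is a direct check of boundedness. By the change of variables formula \eqref{COV},
$$\|D_\varphi f\|^2_{\mathcal D_\alpha}\cong |f'(\varphi(0))|^2+\int_{s\mathbb D}|f''(z)|^2N_{\varphi,\alpha}(z)\,dA(z).$$
Bound $\sup_{|z|\le s}|f''(z)|\lesssim\|f\|_{\mathcal D_\alpha}$ via the kernel estimate of Lemma \ref{L2} (with $n=0$, $r=s$). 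Then use $\int_{\mathbb D}N_{\varphi,\alpha}\,dA\cong\int_{\mathbb D}|\varphi'|^2dA_\alpha\lesssim\|\varphi\|^2_{\mathcal D_\alpha}<\infty$, which is where the hypothesis $\varphi\in\mathcal D_\alpha$ enters.

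This direct route even gives compactness without Theorem \ref{THC_3}. Take $f_n\to0$ weakly with $\|f_n\|\le1$. Then $f_n''\to0$ uniformly on $s\overline{\mathbb D}$, so by dominated convergence (against the finite measure $N_{\varphi,\alpha}dA$) we get $\|D_\varphi f_n\|\to0$. I do not expect any real obstacle; the main step is just noting that $N_{\varphi,\alpha}$ vanishes on the annulus $s<|w|<1$.
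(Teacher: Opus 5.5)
Your main argument is the paper's proof: $|\varphi|\le s<1$ forces $N_{\varphi,\alpha}\equiv 0$ on $s<|w|<1$, so the limit in Theorem \ref{THC_3} vanishes and compactness follows. Your supplementary direct argument is also correct and more self-contained: it uses only $\int_{\mathbb D}N_{\varphi,\alpha}\,dA=\int_{\mathbb D}|\varphi'|^2\,dA_\alpha<\infty$ and uniform control of $f''$ on $s\overline{\mathbb D}$, so it bypasses the essential-norm machinery behind Theorem \ref{THC_3} and also the pointwise boundedness of $N_{\varphi,\alpha}$ on $\{|w|\le r_0\}$ (in particular near $\varphi(0)$), which the paper's proof of that theorem asserts only ``by definition''.
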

		
		\begin{proof}
			Suppose that $\|\varphi\|_{\infty}<1.$ Then there exists $M\in (0,1)$ such that $$\varphi(\mathbb D)\subseteq\{w\in\mathbb C: |w|\leq M\}=\overline{B(0,M)}.$$
			For $w\in \mathbb D\setminus \overline{B(0,M)}$ there does not exist $z \in \mathbb D$ such that $\varphi(z)=w.$ By definition of the generalized Nevanlinna counting function, this implies $N_{\varphi, \alpha}(w)=0.$ Consequently, $\frac{N_{\varphi, \alpha}(w)}{(1-|w|^2)^{\alpha+2}}=0$ for all $w\in\mathbb D\setminus\overline{B(0,M)}.$ Thus, $\lim\limits_{|w|\to1^-}\frac{N_{\varphi, \alpha}(w)}{(1-|w|^2)^{\alpha+2}}=0.$ Therefore, by Theorem \ref{THC_3}, $D_{\varphi}$ is compact on $\mathcal D_\alpha.$
		\end{proof}

		As a consequence of the preceding results, we establish that the boundedness (respectively, compactness) of $D_{\varphi}$ on $\mathcal{D}_{\alpha}$ for some $\alpha \in (0,1)$ implies its boundedness (respectively, compactness) on $\mathcal{D}_{\gamma}$ for every $\gamma \in (\alpha,1)$.
		\begin{proposition}
			Let $\varphi$ be a holomorphic self-map of $\mathbb D$.\\
			$(i)$~~If $D_\varphi$ is bounded on $\mathcal D_\alpha$ for some $\alpha\in(0,1)$ then $D_\varphi$ is bounded on $\mathcal D_\gamma$ for each $\gamma\in(\alpha,1).$\\
			$(ii)$~~If $D_\varphi$ is compact on $\mathcal D_\alpha$ for some $\alpha\in(0,1)$ then $D_\varphi$ is compact on $\mathcal D_\gamma$ for each $\gamma\in(\alpha,1).$
		\end{proposition}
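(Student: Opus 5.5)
The plan is to reduce both parts to the characterizations in Theorem \ref{TH1} and Theorem \ref{THC_3}. The key is a pointwise comparison between $N_{\varphi,\gamma}$ and $N_{\varphi,\alpha}$. For $\gamma>\alpha$ and every preimage $z$ of $w$ we have $(1-|z|^2)^{\gamma}\le (1-|z|^2)^{\alpha}$, hence
$$N_{\varphi,\gamma}(w)\le N_{\varphi,\alpha}(w),\qquad w\in\mathbb D\setminus\{\varphi(0)\}.$$
This alone is not enough, because the denominator also changes: $(1-|w|^2)^{\gamma+2}\le (1-|w|^2)^{\alpha+2}$. So I need a sharper bound that exploits the Schwarz--Pick type relation between $1-|z|$ and $1-|w|$ for $w=\varphi(z)$.

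By the Schwarz--Pick lemma applied to $\varphi$ composed with a disc automorphism,
$$\frac{1-|z|}{1+|z|}\le \frac{1+|\varphi(0)|}{1-|\varphi(0)|}\,\cdot\frac{1-|\varphi(z)|^2}{1-|\varphi(0)|^2}\cdot C,$$
that is, $1-|z|^2\lesssim 1-|\varphi(z)|^2$ with a constant depending only on $\varphi(0)$. This is the standard fact $1-|z|\le C_\varphi(1-|\varphi(z)|)$. For each preimage $z$ of $w$ it gives
$$(1-|z|^2)^{\gamma}=(1-|z|^2)^{\alpha}(1-|z|^2)^{\gamma-\alpha}\lesssim (1-|z|^2)^{\alpha}(1-|w|^2)^{\gamma-\alpha}.$$
Summing over preimages yields $N_{\varphi,\gamma}(w)\lesssim (1-|w|^2)^{\gamma-\alpha}N_{\varphi,\alpha}(w)$, and therefore
$$\frac{N_{\varphi,\gamma}(w)}{(1-|w|^2)^{\gamma+2}}\lesssim \frac{N_{\varphi,\alpha}(w)}{(1-|w|^2)^{\alpha+2}}.$$

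For (i), boundedness on $\mathcal D_\alpha$ makes the right-hand supremum finite by Theorem \ref{TH1}, so the left-hand supremum is finite. For (ii), Theorem \ref{THC_3} makes the right-hand side tend to $0$, so the left-hand side does too.

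The step I expect to need the most care is the hypotheses. Theorem \ref{TH1} and Theorem \ref{THC_3} require $\varphi\in\mathcal D_\gamma$, while the proposition assumes only that $\varphi$ is a holomorphic self-map.
\begin{itemize}
\item The easy inclusion is $\mathcal D_\alpha\subseteq\mathcal D_\gamma$ for $\gamma>\alpha$, since $(n+1)^{1-\gamma}\le(n+1)^{1-\alpha}$.
\item It remains to show $\varphi\in\mathcal D_\alpha$. If $D_\varphi$ is bounded on $\mathcal D_\alpha$, then applying it to $f(z)=z^2/2\in\mathcal D_\alpha$ gives $D_\varphi f=\varphi\in\mathcal D_\alpha$.
\end{itemize}
Hence $\varphi\in\mathcal D_\alpha\subseteq\mathcal D_\gamma$, and both theorems apply on $\mathcal D_\gamma$.
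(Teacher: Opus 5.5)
Your proof is correct, but it handles the case $\varphi(0)\neq0$ differently from the paper.

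The paper first assumes $\varphi(0)=0$. There the Schwarz lemma gives $1-|z|^2\le 1-|\varphi(z)|^2$ with constant $1$, hence $\frac{N_{\varphi,\gamma}(w)}{(1-|w|^2)^{\gamma+2}}\le\frac{N_{\varphi,\alpha}(w)}{(1-|w|^2)^{\alpha+2}}$. For general $\varphi$ it passes to $\varphi_\beta\circ\varphi$ with $\beta=\varphi(0)$ and uses $N_{\varphi_\beta\circ\varphi,\alpha}=N_{\varphi,\alpha}\circ\varphi_\beta$. It then compares the ratio for $\varphi$ at $\varphi_\beta(w)$ with the ratio for $\varphi_\beta\circ\varphi$ at $w$ (two-sided), and transfers boundedness (and, ``analogously'', compactness) through this conjugation.

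You instead invoke the Schwarz--Pick consequence $1-|z|\le C_{\varphi(0)}(1-|\varphi(z)|)$ directly. This yields the same comparison for every $\varphi$ at once, with a harmless constant $C^{\gamma-\alpha}$. It removes the conjugation step and the facts that step silently needs: that $\varphi_\beta\circ\varphi\in\mathcal D_\alpha$, and that $|w|\to1^-$ iff $|\varphi_\beta(w)|\to1^-$ for the compactness part. The paper's version buys a constant-free inequality when $\varphi(0)=0$, but only finiteness of the supremum and vanishing of the limit matter here.

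You also verify the hypothesis $\varphi\in\mathcal D_\gamma$ required by Theorems \ref{TH1} and \ref{THC_3}, via $D_\varphi(z^2/2)=\varphi$ and $\mathcal D_\alpha\subseteq\mathcal D_\gamma$. The paper leaves this implicit, although the proposition only assumes $\varphi$ is a self-map.

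One cosmetic fix: your displayed Schwarz--Pick inequality is garbled. The clean form, from the triangle inequality for the hyperbolic distance, is $\frac{1-|z|}{1+|z|}\le\frac{1+|\varphi(0)|}{1-|\varphi(0)|}\cdot\frac{1-|\varphi(z)|}{1+|\varphi(z)|}$. It gives $1-|z|^2\le 4\,\frac{1+|\varphi(0)|}{1-|\varphi(0)|}\,(1-|\varphi(z)|^2)$.
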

		\begin{proof}
			First, assume that $\varphi(0)=0.$ Then by the Schwarz Lemma, we obtain
			$$1-|z|^2\leq 1-|\varphi(z)|^2$$ for all $z\in\mathbb D.$ Thus, for $w\in\mathbb D\setminus\{\varphi(0)\},$
			\begin{eqnarray*}
				N_{\varphi,\gamma}(w)
				=\sum\limits_{z:\varphi(z)=w}(1-|z|^2)^{\gamma-\alpha+\alpha}
				\leq (1-|w|^2)^{\gamma-\alpha}\sum\limits_{z:\varphi(z)=w}(1-|z|^2)^\alpha.
			\end{eqnarray*}
			Consequently, for each $\gamma\in(\alpha,1),$ 
			\begin{eqnarray}\label{eqD2}
				\frac{ N_{\varphi,\gamma}(w)}{(1-|w|^2)^{\gamma+2}}\leq \frac{ N_{\varphi,\alpha}(w)}{(1-|w|^2)^{\alpha+2}}.
			\end{eqnarray}
			Suppose that $D_\varphi$ is bounded on $\mathcal D_\alpha$. From Theorem \ref{TH1}, it follows that  
			$$\sup_{w\in \mathbb D\setminus\{\varphi(0)\}}\frac{ N_{\varphi,\gamma}(w)}{(1-|w|^2)^{\gamma+2}}\leq\sup_{w\in \mathbb D\setminus\{\varphi(0)\}}\frac{ N_{\varphi,\alpha}(w)}{(1-|w|^2)^{\alpha+2}}<\infty$$ for all $\gamma\in(\alpha,1).$ Hence $D_\varphi$ is bounded on $\mathcal D_\gamma$ for each $\gamma\in(\alpha,1).$ Again, if $D_\varphi$ is compact on $\mathcal D_{\alpha}$ then by letting $|w|\to 1^-$ in \eqref{eqD2} and applying Theorem \ref{TH2}, it follows that $D_\varphi$ is compact on $\mathcal D_\gamma$ for each $\gamma\in(\alpha,1).$ \\
			We now turn to the case $\varphi(0)=\beta\neq0.$ The function $\varphi_{\beta}\circ\varphi$ fixes the origin and is a holomorphic self-map of $\mathbb D.$ Now, for $w\in\mathbb D,$ 
			$$ N_{\varphi_\beta \circ\ \varphi, \alpha}(w)=\sum\limits_{z:\varphi(z)=\varphi_\beta(w)}(1-|z|^2)^\alpha=N_{\varphi,\alpha}(\varphi_\beta(w)).$$
			A direct computation yields
			\begin{eqnarray}\label{eqr_5}
				\frac{N_{\varphi,\alpha}(\varphi_\beta(w))}{(1-|\varphi_\beta(w)|^2)^{\alpha+2}}=\frac{|1-\overline{\beta}w|^{2(\alpha+2)}}{(1-|\beta|^2)^{\alpha+2}}\frac{N_{\varphi_\beta \circ\ \varphi, \alpha}(w)}{(1-|w|^2)^{\alpha+2}}.
			\end{eqnarray}
			Since $1-|\beta|\leq |1-\overline{\beta}w|\leq1+|\beta|,$ it follows from \eqref{eqr_5} that 
			\begin{eqnarray}\label{eqr_6}
				\frac{N_{\varphi,\alpha}(\varphi_\beta(w))}{(1-|\varphi_\beta(w)|^2)^{\alpha+2}}\cong\frac{N_{\varphi_\beta \circ\ \varphi, \alpha}(w)}{(1-|w|^2)^{\alpha+2}}.
			\end{eqnarray}
			Thus, by Theorem \ref{TH1}, $D_\varphi$ is bounded on $\mathcal D_\alpha$ if and only if $D_{\varphi_\beta \circ\ \varphi}$ is bounded on $\mathcal D_\alpha.$  From our previous argument, $D_{\varphi_\beta\circ\varphi}$ is bounded on $\mathcal D_\gamma$ for each $\gamma\in(\alpha,1).$ Consequently, $D_\varphi$ is bounded on $\mathcal D_\gamma$ for each $\gamma\in(\alpha,1).$ The compactness can be established by an analogous argument.
		\end{proof}

		Next, we obtain a characterization of Hilbert–Schmidt composition–differentiation operators on $\mathcal D_\alpha.$
		Recall that, a linear operator $T$ on a separable Hilbert space $\mathcal H$ is Hilbert-Schmidt if and only if $\sum\limits_{n=0}^\infty\|T e_n\|_{\mathcal H}^2<\infty$ for some orthonormal basis $\{e_n\}$ of $\mathcal H.$

		\begin{proof}[Proof of Theorem \ref{TH_HS}]
			Let us consider an orthonormal basis $\{e_n\}_{n=0}^\infty$ on $\mathcal D_\alpha,$ defined by $e_n(w)=(n+1)^{\frac{\alpha-1}{2}}w^n$. Then $D_\varphi$ is Hilbert-Schmidt on $\mathcal D_\alpha$ if and only if 
			$$\sum\limits_{n=0}^\infty\|D_\varphi e_n\|^2_{\mathcal D_\alpha}<\infty.$$ 
			Now, \begin{align*}
				\sum\limits_{n=0}^\infty\|D_\varphi e_n\|^2_{\mathcal D_\alpha}&\cong\sum\limits_{n=0}^\infty |e_n'(\varphi(0)|^2+\sum\limits_{n=0}^\infty \int_{\mathbb D}|e_n''(\varphi(w)|^2|\varphi'(w)|^2dA_{\alpha}(w)\\
				&\cong\sum\limits_{n=0}^\infty n^2(n+1)^{\alpha-1}|\varphi(0)|^{2(n-1)}\\
				&+\sum\limits_{n=0}^\infty \int_{\mathbb D} n^2 (n-1)^2(n+1)^{\alpha-1}|\varphi(w)|^{2(n-2)}|\varphi'(w)|^2dA_{\alpha}(w).
			\end{align*}
			As $|\varphi(0)|<1,$ the series $\sum\limits_{n=0}^\infty n^2(n+1)^{\alpha-1}|\varphi(0)|^{2(n-1)}$ is convergent. Thus,
			$D_\varphi$ is Hilbert-Schmidt on $\mathcal D_\alpha$ if and only if
			$$\sum\limits_{n=0}^\infty \int_{\mathbb D} n^2 (n-1)^2(n+1)^{\alpha-1}|\varphi(w)|^{2(n-2)}|\varphi'(w)|^2dA_{\alpha}(w)<\infty. $$
			Using Stirling’s formula, one easily deduces that $$\sum\limits_{n=0}^\infty n^2 (n-1)^2(n+1)^{\alpha-1}|\varphi(w)|^{2(n-2)}\cong \sum\limits_{n=0}^\infty n^{\alpha+3}|\varphi(w)|^{2n}\cong \frac{1}{(1-|\varphi(w)|^2)^{\alpha+4}}.$$ Therefore, 
			$D_\varphi$ is Hilbert-Schmidt on $\mathcal D_\alpha$ if and only if
			$$\int_{\mathbb D}\frac{|\varphi'(w)|^2}{(1-|\varphi(w)|^2)^{\alpha+4}}dA_{\alpha}(w)<\infty.$$
			By using change of variables formula, we obtain 
			$$\int_{\mathbb D}\frac{|\varphi'(w)|^2}{(1-|\varphi(w)|^2)^{\alpha+4}}dA_{\alpha}(w)=\int_{\mathbb D}\frac{N_{\varphi,\alpha}(w)}{(1-|w|^2)^{\alpha+4}}dA(w)<\infty.$$ This completes the proof.
		\end{proof}

		We conclude this section by deriving the norm of the composition-differentiation operator $D_\varphi$ induced by $\varphi(z)=rz,$ where $r \in \mathbb D \setminus \{0\}.$ It follows from Theorem \ref{THC_3} that each such $\varphi$ induces a compact operator on $\mathcal D_\alpha$. For $\varphi=0,$ it is straightforward to verify that $\|D_\varphi\|=1.$

		\begin{proof}[Proof of Theorem \ref{Th_norm}]
			Let us consider an orthonormal basis $\{e_n\}_{n=0}^\infty$ on $\mathcal D_\alpha,$ defined by $e_n(z)=(n+1)^{\frac{\alpha+1}{2}}z^n$. Now, $$D_\varphi e_n(z)=e_n'(\varphi(z))=n(n+1)^{\frac{\alpha-1}{2}}(rz)^{n-1}=n^{\frac{3-\alpha}{2}}(n+1)^{\frac{\alpha-1}{2}}r^{n-1}e_{n-1}(z).$$ Thus, $$\|D_\varphi\|\geq \sup\limits_{n\in\mathbb N}n^{\frac{3-\alpha}{2}}(n+1)^{\frac{\alpha-1}{2}}|r|^{n-1}.$$ Now, the function $f(x)=x^{\frac{3-\alpha}{2}}(x+1)^{\frac{\alpha-1}{2}}|r|^{x-1}$ has unique critical point $x_0$ in $(1,\infty),$ which is the absolute maximum of $f$ on $(1,\infty).$ After a simple calculation we get $$x_0=\frac{-(1+\log|r|)-\sqrt{(1+\log|r|)^2-2(3-\alpha)\log|r|}}{2 \log|r| }.$$ Let $$\eta=\begin{cases}
				\lfloor x_0\rfloor+1, &f(\lfloor x_0\rfloor)<f(\lfloor x_0\rfloor+1) \\
				\lfloor x_0\rfloor,& \text{otherwise}.\end{cases}$$
			Thus, $\sup\limits_{n\in\mathbb N}n^{\frac{3-\alpha}{2}}(n+1)^{\frac{\alpha-1}{2}}|r|^{n-1}=\eta^{\frac{3-\alpha}{2}}(\eta+1)^{\frac{\alpha-1}{2}}|r|^{\eta-1}.$ Therefore,
			$$\|D_\varphi\|\geq \eta^{\frac{3-\alpha}{2}}(\eta+1)^{\frac{\alpha-1}{2}}|r|^{\eta-1}.$$ 
			For the upper estimate, let $$f(z)=\sum\limits_{n=0}^\infty a_nz^n=\sum\limits_{n=0}^\infty a_n (n+1)^{\frac{1-\alpha}{2}}e_n(z)=a_0+\sum\limits_{n=1}^\infty a_n (n+1)^{\frac{1-\alpha}{2}}e_n(z).$$ Then 
			\begin{align*}
				D_\varphi f(z)
				=f'(\varphi(z))
				=\sum\limits_{n=1}^\infty a_n (n+1)^{\frac{1-\alpha}{2}}e_n'(\varphi(z))
				=a_1+\sum\limits_{n=2}^\infty a_n n^{\frac{3-\alpha}{2}}r^{n-1}e_{n-1}(z).
			\end{align*}
			Now, 
			\begin{align*}
				\|D_\varphi f\|^2_{\mathcal D_\alpha}
				&=|a_1|^2+\sum\limits_{n=2}^\infty |a_n|^2n^{3-\alpha}|r|^{2(n-1)}\\
				&=|a_1|^2+\sum\limits_{n=2}^\infty |a_n|^2 (n+1)^{1-\alpha}(\eta^{\frac{3-\alpha}{2}}(\eta+1)^{\frac{\alpha-1}{2}}|r|^{\eta-1})^2\\
				&\leq \eta^{3-\alpha}(\eta+1)^{\alpha-1}|r|^{2(\eta-1)}\left(|a_1|^2+\sum\limits_{n=2}^\infty |a_n|^2 (n+1)^{1-\alpha}\right)\\
				&\leq\eta^{3-\alpha}(\eta+1)^{\alpha-1}|r|^{2(\eta-1)}\|f\|^2_{\mathcal D_\alpha}.
			\end{align*} 
			Thus, $\|D_\varphi\|\leq \eta^{\frac{3-\alpha}{2}}(\eta+1)^{\frac{\alpha-1}{2}}|r|^{\eta-1}.$ Therefore, $\|D_\varphi\|=\eta^{\frac{3-\alpha}{2}}(\eta+1)^{\frac{\alpha-1}{2}}|r|^{\eta-1}.$
		\end{proof}
		
		In the following, we give some examples of composition-differentiation operators acting on $\mathcal D_{\frac 12}$ whose norms can be calculated using Theorem \ref{Th_norm}.
		\begin{example}
			$(i)$~Let $\varphi_1(z)=\frac{z}{2}$ and $\alpha=\frac 12.$ Then the critical point of the function $f(x)=x^{\frac{5}{4}}(x+1)^{-\frac{1}{4}}(\frac 12)^{x-1}$ is $x_0=1.5823.$ Thus, $f(\lfloor x_0 \rfloor)=f(1)=0.8409$ and $f(\lfloor x_0 \rfloor+1)=f(2)=0.9036.$ As $f(\lfloor x_0 \rfloor)<f(\lfloor x_0 \rfloor+1),$ it follows from Theorem \ref{Th_norm} that $\|D_{\varphi_1}\|=0.9036.$ \\
			$(ii)$~Let $\varphi_2(z)=0.85z$ and $\alpha=\frac 12.$ Then the critical point of the function $f(x)=x^{\frac{5}{4}}(x+1)^{-\frac{1}{4}}(0.85)^{x-1}$ is $x_0=6.3605.$ Hence, $f(\lfloor x_0 \rfloor)=f(6)=2.5615$ and $f(\lfloor x_0 \rfloor+1)=f(7)=2.5533.$ As $f(\lfloor x_0 \rfloor)>f(\lfloor x_0 \rfloor+1),$ it follows from Theorem \ref{Th_norm} that $\|D_{\varphi_2}\|=2.5615.$
		\end{example}

		\section*{Declarations}	
		\noindent
		\textit{Acknowledgements.} Dr. Anirban Sen is supported by Czech Science Foundation (GA CR) grant no. 25-18042S. Miss Somdatta Barik would like to thank UGC, Govt. of India, for the financial support in the form of Senior Research Fellowship under the mentorship of Prof. Kallol Paul.
			\\
			\textit{Author Contributions:} All authors contributed equally to this manuscript and approved the final version.\\
			\textit{Data Availability :} No data was used for the research described in this article.\\
			\textit{Conflict of interest:} The authors declare that they have no conflict of interest.\\


\begin{thebibliography}{99}
			
			%\bibitem{AM_BOOK} J. Agler and J.E. McCarthy, Pick interpolation and Hilbert function spaces, Graduate Studies in Mathematics, vol. 44, American Mathematical Society, Providence, RI, (2002).
			
			\bibitem{A_PAMS_1992} A. Aleman, Hilbert spaces of analytic functions between the Hardy and the Dirichlet space, Proc. Amer. Math. Soc. 115 (1992), 97--104.
			
			
			\bibitem{AHP_JMAAA_2022} R.F. Allen, K.C. Heller and M.A. Pons, Composition-differentiation operators on the Dirichlet space, J. Math. Anal. Appl. 512 (2022), no. 2, Paper No. 126186, 18 pp.
			
			%\bibitem{CGP_MA_2015} I. Chalendar, E. A. Gallardo-Gutiérrez and J. R. Partington, Weighted composition operators on the Dirichlet space: boundedness and spectral properties, Math. Ann. 363 (2015), no. 3-4, 1265--1279.
			
			\bibitem{CM_Book_95} C.C. Cowen and B.D. MacCluer, Composition operators on spaces of analytic functions, Stud. Adv. Math. CRC Press, Boca Raton, FL, (1995).
			
			
			\bibitem{FH_PAMS_2020} M. Fatehi and C.N.B. Hammond, Composition-differentiation operators on the Hardy space, Proc. Amer. Math. Soc. 148 (7) (2020), 2893--2900.
			
			%\bibitem{G_PAMS_2008} G. Gunatillake, Compact weighted composition operators on the Hardy space, Proc. Amer. Math. Soc. 135 (2007), no. 2, 461--467.
			
			\bibitem{HP_RMJM_2005} R.A. Hibschweiler and N. Portnoy, Composition followed by differentiation between Bergman and Hardy spaces, Rocky Mountain J. Math. 35 (3) (2005), 843--855.
			
			\bibitem{MS_CJM_1986} B.D. MacCluer and J.H. Shapiro, Angular derivatives and compact composition operators on the Hardy and Bergman spaces, Canad. J. Math. 38 (1986), no. 4, 878--906.
			
			\bibitem{0_BAMS_2006} S. Ohno, Products of composition and differentiation between Hardy spaces, Bull. Aust. Math. Soc. 73 (2) (2006), 235--243.
			
			\bibitem{PP_JMAAA_2013} J. Pau and P.A. Pérez, Composition operators acting on weighted Dirichlet spaces, J. Math. Anal. Appl. 401 (2013), no. 2, 682--694.
			
			\bibitem{R_BOOK_1995} T. Ransford, Potential Theory in the Complex Plane,
			London Math. Soc. Stud. Texts, 28 Cambridge University Press, Cambridge, (1995).
			
			%\bibitem{RUDIN_BOOK} W. Rudin, Real and complex analysis, McGraw-Hill Book Co., New York, (1987).
			
			\bibitem{Shaprio_BOOK} J.H. Shaprio, Composition operators and classical function theory, Universitext Tracts Math., Springer-Verlag, New York, (1993).
			
			\bibitem{S_AM_1987} J.H. Shapiro, The essential norm of a composition operator, Ann. of Math. (2) 125 (1987), no. 2, 375--404.
			
			
			\bibitem{ZHU_BOOK}  K. Zhu, Operator theory in function spaces, Math. Surveys Monogr., 138 American Mathematical Society, Providence, RI, (2007).
			
			\bibitem{Z_PAMS_1998}  N. Zorboska, Composition operators on weighted Dirichlet spaces, Proc. Amer. Math. Soc. 126 (1998), no. 7, 2013--2023.
			
			\bibitem{Z_IUMJ_1990} N. Zorboska, Composition operators on $S_a$ spaces, Indiana Univ. Math. J. 39 (1990), no. 3, 847--857.
			
		\bibitem{Z_PAMS_1989} N. Zorboska, Composition operators induced by functions with supremum strictly smaller than $1$, Proc. Amer. Math. Soc. 106 (1989), no. 3, 679--684.
			
		\end{thebibliography}
	\end{document}